\documentclass[11pt]{amsart}
\usepackage{amsopn, amssymb, amscd, amsmath, amsthm}
\usepackage{graphicx, graphics, epsfig}


\topmargin 0cm \evensidemargin 0cm \oddsidemargin 0cm \textwidth 15cm \textheight 23cm


\newcommand{\nc}{\newcommand}

\nc{\fg}{\mathfrak{f}}  \nc{\vg}{\mathfrak{v}} \nc{\wg}{\mathfrak{w}} \nc{\zg}{\mathfrak{z}} \nc{\ngo}{\mathfrak{n}} \nc{\kg}{\mathfrak{k}} \nc{\mg}{\mathfrak{m}} \nc{\bg}{\mathfrak{b}} \nc{\ggo}{\mathfrak{g}} \nc{\ggob}{\overline{\mathfrak{g}}} \nc{\sog}{\mathfrak{so}} \nc{\sug}{\mathfrak{su}} \nc{\spg}{\mathfrak{sp}} \nc{\slg}{\mathfrak{sl}} \nc{\glg}{\mathfrak{gl}} \nc{\cg}{\mathfrak{c}} \nc{\rg}{\mathfrak{r}}  \nc{\hg}{\mathfrak{h}} \nc{\tgo}{\mathfrak{t}} \nc{\ug}{\mathfrak{u}} \nc{\dg}{\mathfrak{d}} \nc{\ag}{\mathfrak{a}} \nc{\pg}{\mathfrak{p}} \nc{\sg}{\mathfrak{s}} \nc{\affg}{\mathfrak{aff}} \nc{\qg}{\mathfrak{q}}
\nc{\Xg}{\mathfrak{X}}

\nc{\pca}{\mathcal{P}} \nc{\nca}{\mathcal{N}} \nc{\lca}{\mathcal{L}} \nc{\oca}{\mathcal{O}} \nc{\mca}{\mathcal{M}} \nc{\tca}{\mathcal{T}} \nc{\aca}{\mathcal{A}} \nc{\cca}{\mathcal{C}} \nc{\gca}{\mathcal{G}} \nc{\sca}{\mathcal{S}} \nc{\hca}{\mathcal{H}} \nc{\bca}{\mathcal{B}} \nc{\dca}{\mathcal{D}}

\nc{\vp}{\varphi} \nc{\ddt}{\tfrac{{\rm d}}{{\rm d}t}} \nc{\dds}{\tfrac{{\rm d}}{{\rm d}s}} \nc{\ddtbig}{\frac{{\rm d}}{{\rm d}t}} \nc{\dd}{{\rm d}}
\nc{\dpar}{\tfrac{\partial}{\partial t}} \nc{\im}{\mathtt{i}}

\nc{\SO}{\mathrm{SO}} \nc{\Spe}{\mathrm{Sp}} \nc{\Sl}{\mathrm{SL}}
\nc{\SU}{\mathrm{SU}} \nc{\Or}{\mathrm{O}} \nc{\U}{\mathrm{U}} \nc{\Gl}{\mathrm{GL}}
\nc{\Se}{\mathrm{S}} \nc{\Cl}{\mathrm{Cl}} \nc{\Spein}{\mathrm{Spin}}
\nc{\Pin}{\mathrm{Pin}} \nc{\G}{\mathrm{GL}_n(\RR)} \nc{\g}{\mathfrak{gl}_n(\RR)}

\nc{\RR}{{\mathbb R}} \nc{\HH}{{\mathbb H}} \nc{\CC}{{\mathbb C}} \nc{\ZZ}{{\mathbb Z}}
\nc{\FF}{{\mathbb F}} \nc{\NN}{{\mathbb N}} \nc{\QQ}{{\mathbb Q}} \nc{\PP}{{\mathbb P}}

\nc{\vs}{\vspace{.2cm}} \nc{\vsp}{\vspace{1cm}} \nc{\ip}{{\langle\cdot,\cdot\rangle}}
\nc{\ipp}{(\cdot,\cdot)} \nc{\la}{\langle} \nc{\ra}{\rangle} \nc{\unm}{\tfrac{1}{2}}
\nc{\unc}{\tfrac{1}{4}} \nc{\und}{\tfrac{1}{16}} \nc{\no}{\vs\noindent}
\nc{\lam}{\Lambda^2(\RR^n)^*\otimes\RR^n} \nc{\tangz}{{\rm T}^{\rm Zar}}
\nc{\lamg}{\Lambda^2\ggo^*\otimes\ggo}
\nc{\nor}{{\sf n}}  \nc{\mum}{/\!\!/} \nc{\kir}{/\!\!/\!\!/}
\nc{\Ri}{\tfrac{4\Ric_{\mu}}{||\mu||^2}} \nc{\ds}{\displaystyle}
\nc{\ben}{\begin{enumerate}} \nc{\een}{\end{enumerate}} \nc{\f}{\frac}
\nc{\lb}{[\cdot,\cdot]} \nc{\isn}{\tfrac{1}{||v||^2}}
\nc{\gkp}{(\ggo=\kg\oplus\pg,\ip)} \nc{\ukh}{(\ug=\kg\oplus\hg,\ip)}
\nc{\tgkp}{(\tilde{\ggo}=\kg\oplus\pg,\ip)}
\nc{\wt}{\widetilde}
\nc{\raw}{\rightarrow} \nc{\lraw}{\longrightarrow} \nc{\hqn}{\mathcal{H}_{q,n}}
\nc{\minimatrix}[4]{\left[\begin{smallmatrix} {#1} & {#2} \\ {#3} & {#4} \end{smallmatrix}\right]}

\nc{\ad}{\operatorname{ad}}  \nc{\Aut}{\operatorname{Aut}}   \nc{\Inn}{\operatorname{Inn}}   \nc{\Lie}{\operatorname{Lie}} \nc{\Ad}{\operatorname{Ad}} \nc{\Der}{\operatorname{Der}} \nc{\rad}{\operatorname{r}} \nc{\kf}{\operatorname{B}}

\nc{\End}{\operatorname{End}} \nc{\rank}{\operatorname{rank}} \nc{\Ker}{\operatorname{Ker}} \nc{\tr}{\operatorname{tr}} \nc{\sym}{\operatorname{sym}} \nc{\diag}{\operatorname{diag}} \nc{\proy}{\operatorname{pr}} \nc{\Adj}{\operatorname{Adj}}

\nc{\Hess}{\operatorname{Hess}}  \nc{\dif}{\operatorname{d}} \nc{\sen}{\operatorname{sen}} \nc{\grad}{\operatorname{grad}} \nc{\Order}{\operatorname{O}} \nc{\divg}{\operatorname{div}}

\nc{\Iso}{\operatorname{I}} \nc{\Diff}{\operatorname{Diff}} \nc{\ricci}{\operatorname{Ric}}  \nc{\Rc}{\operatorname{Rc}} \nc{\Ricci}{\operatorname{Ric}} \nc{\Riem}{\operatorname{Rm}} \nc{\scalar}{\operatorname{sc}} \nc{\scalarm}{\hat{\operatorname{R}}} \nc{\riccim}{\widehat{\operatorname{Ric}}} \nc{\tang}{\operatorname{T}} \nc{\vol}{\operatorname{vol}}

\nc{\mm}{\operatorname{M}} \nc{\CH}{\operatorname{CH}} \nc{\Irr}{\operatorname{Irr}} \nc{\mcc}{\operatorname{mcc}}


\theoremstyle{plain}
\newtheorem{theorem}{Theorem}[section]

\newtheorem{corollary}[theorem]{Corollary}
\newtheorem{lemma}[theorem]{Lemma}

\theoremstyle{definition}

\theoremstyle{remark}
\newtheorem{remark}[theorem]{Remark}

\title[On homogeneous warped product Einstein metrics]{On homogeneous warped product Einstein metrics}

\author{Ramiro A.~Lafuente}

\address{Universidad Nacional de C\'ordoba, FaMAF and CIEM, C\'ordoba, Argentina}
\email{rlafuente@famaf.unc.edu.ar}

\thanks{This research was partially supported by grants from CONICET, FONCYT and SeCyT (Universidad Nacional de C\'ordoba)}

\begin{document}

\begin{abstract}
In this article we study homogeneous warped product Einstein metrics and its connections with homogeneous Ricci solitons. We show that homogeneous $(\lambda,n+m)$-Einstein manifolds (which are the bases of homogeneous warped product Einstein metrics) are one-dimensional extensions of algebraic solitons. This answers a question from a paper of C.~He, P.~Petersen and W.~Wylie, where they prove the converse statement. Our proof is strongly based on their results, but it also makes use of sharp tools from the theory of homogeneous Ricci solitons. As an application, we obtain that any homogeneous warped product Einstein metric with homogeneous base is diffeomorphic to a product of homogeneous Einstein manifolds.
\end{abstract}

\maketitle

\section{Introduction}

%

A \emph{$(\lambda,n+m)$-Einstein} manifold is a complete Riemannian manifold $(M^n, g, w)$ where $w$ is a positive smooth function on $M$ satisfying
\begin{equation}\label{lnm}
    \Hess w = \frac{w}{m} \left( \Ricci - \lambda g \right).
\end{equation}
If $m=1$, the additional assumption $\Delta w = -\lambda w$ is made. These spaces have been previously studied in \cite{CSW, KK, HePtrWyl, HePtrWylClassif, HePtrWylUniq}, among others, sometimes with the additional hypothesis that $\partial M \neq \emptyset$ (which we do not consider in this article). Notice that when $w$ is constant, \eqref{lnm} is simply the Einstein equation. A natural geometric interpretation for this equation is that $(M,g_M,w)$ satisfies \eqref{lnm} for $m>1$ if and only if there is an $(n+m)$-dimensional warped product Einstein metric $g_E$ of the form
\[
g_E = g_M + w^2 g_{F^m}, \qquad \hbox{with } \Ricci_{g_E} = \lambda g_E,
\]
see \cite[Proposition 1.1]{HePtrWylClassif}. $(\lambda,n+m)$-Einstein manifolds are also called \emph{$m$-quasi Einstein} manifolds in the literature, because if one defines $f$ by $e^{-f/m} = w$ then the $(\lambda,n+m)$-Einstein equation becomes
\[
    \Ricci^m_f = \Ricci + \Hess f - \frac{\dd f \otimes \dd f}{m} = \lambda g.
\]
$\Ricci^m_f$ is sometimes called the \emph{$m$-Bakry-Emery-Ricci tensor} and it is a generalization of the notion of Ricci curvature for the smooth metric measure space $(M,g, e^{-f} \dd \vol_g)$. 

In this article, we are interested in the case of noncompact homogeneous manifolds. Noncompact homogeneous Einstein manifolds have been extensively studied in the last two decades, see the survey \cite{cruzchica} and the references therein. Recently, a close link between noncompact Einstein homogeneous manifolds and \emph{algebraic} homogeneous Ricci solitons has been found in \cite{alek} and \cite{HePtrWyl}, generalizing the previous work by J.~Lauret in the case of solvmanifolds (\cite{soliton}). It basically states that an algebraic soliton always admits a one-dimensional Einstein extension. It is also proved in \cite{alek} that any noncompact Einstein homogeneous manifold is the one-dimensional extension of an algebraic soliton, provided it admits a transitive \emph{non-unimodular} group of isometries (recall that all known examples admit such a transitive group). These results imply in particular that the Alekseevskii conjecture (cf. \cite[7.57]{Bss}) is equivalent to its much more general analogous for algebraic solitons.

Since the $(\lambda,n+m)$-Einstein equation can be interpreted as the Einstein equation for the $m$-Bakry-Emery-Ricci tensor, and usually many topological and geometric results for Ricci curvature can be extended to this tensor (see e.g.~\cite{WW} and the references therein), it is expected that there should also be an analogous link between algebraic solitons and homogeneous $(\lambda,n+m)$-Einstein manifolds. Indeed, it was also proved by C.~He, P.~Petersen and W.~Wylie that an algebraic soliton always admits a homogeneous $(\lambda,n+m)$-Einstein one-dimensional extension, which in particular implies that these spaces can be isometrically embedded into a homogeneous Einstein manifold with arbitrary codimension. Our main aim in this article is to prove the converse of this result.

\begin{theorem}\label{main}
Any homogeneous $(\lambda,n+m)$-Einstein manifold is a one-dimensional extension of an algebraic homogeneous Ricci soliton.
\end{theorem}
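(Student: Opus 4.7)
The plan is to exploit the rigidity that the $(\lambda,n+m)$-Einstein equation, combined with homogeneity, imposes on the warping function $w$, and then reduce the problem to a one-dimensional extension question that can be settled by applying the sharp characterization of algebraic solitons developed in \cite{alek} to the codimension-one base.

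First I would analyse the symmetries of $w$. Let $G$ denote the identity component of $\Iso(M,g)$; since $\Ricci$ and $g$ are $G$-invariant tensors, equation \eqref{lnm} forces the tensor $\tfrac{1}{w}\Hess w$, equivalently $\Hess f - \tfrac{1}{m}\dd f\otimes \dd f$ with $f = -m\log w$, to be $G$-invariant. Combining this with the trace identity $\Delta w = \tfrac{w}{m}(\scalar - n\lambda)$ and the Einstein condition on the warped total space should imply that $|\nabla w|$ is a function of $w$ alone and that the level sets of $w$ are orbits of a codimension-one subgroup $G' \subset G$. Up to covering, $(M,g)$ is thus realized as $(N\times \RR,\, g_t + \dd t^2)$, with $N = G'/K$ homogeneous, $g_t$ a one-parameter family of $G'$-invariant metrics and $w = w(t)$.

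Decomposing \eqref{lnm} into tangential and transverse components via the standard warped-product formulas, the transverse equation becomes an ODE that pins down the precise form of $w(t)$ and of the shape operator $D := \tfrac{1}{2}g_t^{-1}\dot g_t$, while the tangential part, restricted to a single slice $N\times\{t_0\}$, reduces to an identity of the form
\[
\Ricci_{g_N} = c_1\, g_N + c_2(D + D^t),
\]
for certain constants $c_1,c_2$ depending on $\lambda$ and $m$. This is exactly the \emph{semi-algebraic soliton} condition on $(N,g_N)$.

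The hard part is the last step: upgrading this semi-algebraic identity into a genuine algebraic soliton, that is, producing a transitive group $\wt G$ of isometries of $N$ whose Lie algebra $\wt\ggo = \kg \oplus \wt\ngo$ admits a symmetric derivation $D\in \Der(\wt\ngo)$ satisfying $\Ricci_N = c_1 \operatorname{Id} + c_2 D$. The original group $G'$ need not be this $\wt G$, so one is typically forced to change the transitive presentation of $N$; this is precisely what the sharp tools from \cite{alek} (on the interplay between semi-algebraic and algebraic solitons, and on the non-unimodular presentations of noncompact homogeneous spaces) are designed to provide. Once $N$ is recognised as an algebraic soliton, the fact that $M$ is its one-dimensional extension follows from the construction, proving the theorem.
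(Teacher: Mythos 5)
Your outline --- realize the homogeneous $(\lambda,n+m)$-Einstein space as a homogeneous codimension-one extension, restrict the equation to a slice, and then reduce to soliton theory --- is the right frame, and your first step is essentially the structure theorem of He--Petersen--Wylie (\cite[Theorem 5.1]{HePtrWyl}, quoted as Theorem \ref{thm51} above), which the paper cites rather than re-derives. The genuine gap is in your second step. When you decompose the equation along the slices of a homogeneous one-dimensional extension, the radial derivative of the shape operator does \emph{not} drop out: it contributes (up to constants) the commutator of the symmetric and skew-symmetric parts of the derivation, so the tangential identity you obtain is not $\Ricci_{g_N}=c_1 g_N+c_2(D+D^t)$ but rather
\[
\Ricci_g=\lambda I+S(D_\pg)+\tfrac{1}{\tr S(D_\pg)-\lambda m}\,\bigl[S(D_\pg),A(D_\pg)\bigr],
\]
which is exactly condition (1) of Theorem \ref{thm51}. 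Proving that this extra commutator vanishes --- equivalently, that $D_\pg$ is a normal operator --- is precisely the hard content of the theorem, and your proposal assumes it away by asserting the ``semi-algebraic'' form of the identity. The paper's proof is devoted to this point: it combines the trace inequality $\tr F^2+\lambda\tr F\le 0$ for $F=S(\ad_\pg H+D_\pg)$, extracted from the existence of the warped Einstein extension (Lemma \ref{ineqF}), with the stratification/moment-map inequality $\tr\bigl(\Ricci+S(\ad_\pg H)\bigr)E_\beta\ge 0$ (Lemma \ref{pie}, Theorem \ref{git}) to force equality in a reverse Cauchy--Schwarz estimate (Lemma \ref{cuenta}), and then uses Jablonski's trace identity (Lemma \ref{algebraic}) to get $S(\ad H|_\hg)=0$, normality of $\ad_\pg H$, $D(H)=0$ and finally normality of $D_\pg$.

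Your closing paragraph also mislocates the remaining difficulty. The upgrade from a semi-algebraic identity to an algebraic soliton is not what \cite{alek} provides: the statement that (semi-algebraic) homogeneous Ricci solitons are algebraic is Jablonski's theorem in \cite{Jbl13b}, and it applies to spaces already known to be Ricci solitons --- which is not available here, since until the commutator term is killed the slice is not known to be a soliton of any kind, so there is nothing for that machinery to act on. (The paper does use an ingredient of \cite{Jbl13b}, but only the technical trace lemma quoted as Lemma \ref{algebraic}, not the soliton theorem.) Moreover, even if that route worked, it would give algebraicity with respect to a possibly different transitive group, whereas the paper establishes it for the very presentation $(G/K,g)$ and derivation produced by Theorem \ref{thm51}, which is what makes the structural results of \cite{alek} and the application in Corollary \ref{coromain} directly available. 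As written, then, the proposal reduces the theorem to an unproved claim that is essentially equivalent to it.
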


This answers a question in \cite{HePtrWyl} (see Remark 1.10 from that paper). Using that algebraic solitons admit Einstein one-dimensional extensions we get the following corollary.

\begin{corollary}\label{coromain}
Let $(E,g_E) = (M\times_w F, g_M + w^2 g_F)$ be a homogeneous warped product Einstein metric with a homogeneous base $(M,g_M)$. Then, $M$ admits a homogeneous Einstein metric and thus $E$ is diffeomorphic to a product of homogeneous Einstein manifolds.
\end{corollary}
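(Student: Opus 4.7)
The plan is to combine Theorem \ref{main} with the converse direction established in \cite{HePtrWyl, alek}, namely that every algebraic homogeneous Ricci soliton admits a homogeneous Einstein one-dimensional extension. First I would apply Theorem \ref{main} to the $(\lambda, n+m)$-Einstein base $(M, g_M, w)$ to produce an algebraic homogeneous Ricci soliton $(S, g_S)$ of which $(M, g_M)$ is a one-dimensional extension. Since such extensions are built by enlarging the Lie algebra $\sg$ of $S$ by a derivation, the underlying smooth manifold of any one-dimensional extension of $S$ is diffeomorphic to $S \times \RR$.

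Next I would apply the Einstein extension theorem of \cite{HePtrWyl, alek} to the same algebraic soliton $(S, g_S)$; this yields a homogeneous Einstein manifold $(\wt{M}, \wt{g})$ whose underlying smooth manifold is again $S \times \RR$. Consequently $M$ and $\wt{M}$ are diffeomorphic, which proves the first assertion: $M$ admits a homogeneous Einstein metric, obtained by transporting $\wt{g}$ via this diffeomorphism.

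For the second assertion, I would use that a warped product $M \times_w F$ is always diffeomorphic, as a smooth manifold, to the Cartesian product $M \times F$, so $E \cong \wt{M} \times F$. The warped product Einstein equations force $(F, g_F)$ to be Einstein (cf.~\cite[Proposition 1.1]{HePtrWylClassif}), and the final step is to identify $F$ with a homogeneous Einstein manifold. The main obstacle I anticipate is precisely this last identification; however, under the homogeneity hypothesis on $(E, g_E)$, the possible fibers should be constrained to the standard list of homogeneous Einstein model spaces (sphere, Euclidean space, or hyperbolic space) by the classification in \cite{HePtrWylClassif}, giving the desired decomposition of $E$ as a product of homogeneous Einstein manifolds.
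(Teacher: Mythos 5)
Your argument for the first assertion is essentially the paper's: Theorem \ref{main} exhibits the base, which is a homogeneous $(\lambda,n+m)$-Einstein manifold, as a one-dimensional extension of an algebraic soliton $S=G/K$; every one-dimensional extension of $S$ has underlying manifold $\RR\times G/K$; and the Einstein one-dimensional extension of $S$ provided by \cite{alek,HePtrWyl} therefore endows $M$ with a homogeneous Einstein metric. That part is correct (and the trivial case of constant $w$, where $M$ is already Einstein, is immediate).

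The gap is in your final step identifying the fiber. The classification in \cite{HePtrWylClassif} concerns the \emph{bases}, i.e.\ $(\lambda,n+m)$-Einstein manifolds, and places no restriction on which manifolds occur as fibers; the warped product Einstein equation only forces $(F,g_F)$ to be Einstein with the specific constant $\mu=w\Delta w+(m-1)|\nabla w|^2+\lambda w^2$ determined by the base data, and in the non-trivial homogeneous situation of Theorem \ref{thm51} (where $w=e^{\lambda\alpha r}$ and $\alpha^2=1/(\tr S(D_\pg)-\lambda m)$) one computes $\mu=0$, so the fiber is merely required to be Ricci-flat. Such fibers are by no means confined to your ``standard list'' of sphere, Euclidean and hyperbolic space, nor need they even be diffeomorphic to a homogeneous manifold: for instance a K3 surface is an admissible fiber over the base $\HH^n$ with $w$ the exponential of a Busemann function, and the resulting Einstein warped product is not diffeomorphic to a product of homogeneous Einstein manifolds. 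What excludes these examples is precisely the hypothesis that the total space $(E,g_E)$ is itself homogeneous: it is the homogeneity of $E$, through the structure of homogeneous warped product Einstein metrics in \cite{HePtrWyl}, that forces the fiber to be a homogeneous Einstein manifold, and this is the input tacitly behind the paper's ``thus''. Replace the appeal to \cite{HePtrWylClassif} by this use of the homogeneity of $E$ and your proof coincides with the paper's.
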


We observe that all the structural results for algebraic solitons given in \cite[Theorem 4.6]{alek} can now be applied to study homogeneous $(\lambda,n+m)$-Einstein spaces.

The tools used in the proof of Theorem \ref{main} include a structure theorem for homogeneous $(\lambda,n+m)$-Einstein manifolds given in \cite{HePtrWyl} (Theorem \ref{thm51}), an inequality for the Ricci curvature from \cite{alek} that comes from geometric invariant theory (Lemma \ref{pie}), and a technical lemma from \cite{Jbl13b} about derivations and the Ricci curvature of certain homogeneous spaces (Lemma \ref{algebraic}).

The article is organized as follows. In Section \ref{pre} we recall some definitions and results about the Ricci curvature of a homogeneous manifold, one-dimensional extensions of homogeneous manifolds, and algebraic homogeneous Ricci solitons. Then, in Section \ref{proof} we prove Theorem \ref{main}.

\vs \noindent {\it Acknowledgements.} The author would like to thank Jorge Lauret for very fruitful discussions about this problem.

\section{Preliminaries}\label{pre}

In this section we review the basic facts about the Ricci curvature of homogeneous manifolds, as well as some recent techniques related to it that will be used along the proof of Theorem \ref{main}. We will also briefly introduce algebraic solitons and one dimensional extensions of Riemannian homogeneous spaces.

\subsection{The Ricci curvature of a homogeneous manifold}\label{riccicurvature}

Let $(G/K,g)$ be a connected, almost effective Riemannian homogeneous space, and let $\ggo = \kg \oplus \pg$ be the $\Ad(K)$-invariant decomposition for $\ggo = \Lie(G)$ where $\pg$ is the orthogonal complement of $\kg$ with respect to the Killing form $\kf$ of $\ggo$ (which is negative definite on $\kg$). The metric $g$ is thus identified with an $\Ad(K)$-invariant inner product $\ip$ on $\pg \simeq T_p G/K$. According to \cite{Bss}, the Ricci operator (or (1,1)-Ricci tensor) at the point $eK$ is the symmetric map given by
\[
    \Ricci_g = \mm_\pg - 1/2 \kf_\pg - S(\ad_\pg H) \in \End(\pg).
\]
Here, $\kf_\pg$ is the restriction of the Killing form to $\pg\times \pg$, that is, $\langle \kf_\pg X, Y \rangle = \tr \ad X \ad Y$ for $X,Y \in \pg$. The vector $H\in \pg$, which is usually called the \emph{mean curvature vector}, is defined by the formula $\langle H,X \rangle = \tr \ad X$, $X\in \pg$, and $(\ad_\pg H) X = [H,X]_\pg$, where $\lb_\pg$ is the Lie bracket $\lb$ of $\ggo$ restricted to $\pg \times \pg$ and projected onto $\pg$. For an endomorphism $E\in (\pg,\ip)$, we denote by $S(E)$ and $A(E)$ its symmetric and skew-symmetric parts, respectively, that is
\[
    S(E) = \unm (E+E^t), \qquad A(E) = \unm (E-E^t).
\]
Finally, the map $\mm_\pg$ has a complicated formula (see equation (13) in \cite{alek}), but surprisingly it can be associated with the moment map for the natural left action of $\Gl(\pg)$ on $\Lambda^2 \pg^* \otimes \pg$, thus it can be implicitly defined by
\begin{equation}\label{mmdef}
    \tr \mm_\pg E = \unc \langle \pi(E) \lb_\pg , \lb_\pg \rangle, \qquad E\in \End(\pg),
\end{equation}
where $\ip$ here denotes the inner product on $\Lambda^2 \pg^* \otimes \pg$ induced by the inner product $\ip$ on $\pg$, and $\pi$ is the derivative of the aforementioned $\Gl(\pg)$ action (which has the property that $\pi(E)\lb_\pg = 0$ if and only if $E\in \Der(\pg,\lb_\pg)$). In other words, we have that $m(\lb_\pg) = \tfrac4{\|\lb_\pg\|^2} \mm_\pg$, where $m: \Lambda^2 \pg^* \otimes \pg \to \sym(\pg)$ is the moment map for the natural action of $\Gl(\pg)$ on that space; see \cite[\S 2]{alek} for more details on this matter.

In general, a map $T\in \End(\ggo)$ that preserves $\kg$ induces an endomorphism of $\pg$ which we will denote by $T_\pg \in \End(\pg)$. Moreover, if $T\in \Der(\ggo)$ then $T_\pg\in \Der(\pg,\lb_\pg)$.

\begin{lemma}\label{ricderiv}
If $D\in \Der(\ggo)$ preserves $\kg$, then $\tr  \left(\Ricci + S(\ad_\pg H)\right) D_\pg = 0$.
\end{lemma}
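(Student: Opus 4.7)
The plan is to use the Ricci formula from Section \ref{riccicurvature} to rewrite the quantity as $\tr(\mm_\pg - \tfrac12 \kf_\pg)D_\pg$, and then show separately that each of the two summands pairs to zero with $D_\pg$. The first summand will vanish because of the moment-map description of $\mm_\pg$ together with the fact that $D_\pg$ is a derivation of $(\pg,\lb_\pg)$; the second will vanish because the Killing form is invariant under any derivation of $\ggo$.

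I would begin by checking a preliminary fact: any $D\in\Der(\ggo)$ preserving $\kg$ also preserves $\pg$. Indeed, the Killing form $\kf$ of $\ggo$ satisfies $\kf(DX,Y)+\kf(X,DY)=0$ for all $X,Y\in\ggo$ (invariance under derivations), and $\pg=\kg^{\perp_{\kf}}$; so for $X\in\pg$ and $Y\in\kg$ one gets $\kf(DX,Y)=-\kf(X,DY)=0$ since $DY\in\kg$, which forces $DX\in\pg$. Thus $D_\pg=D|_\pg$. From this and the projection formula $[\,\cdot\,,\cdot]_\pg=\proy_\pg[\,\cdot\,,\cdot]$ one concludes immediately that $D_\pg$ is a derivation of $(\pg,\lb_\pg)$: for $X,Y\in\pg$, $D[X,Y]_\pg=D\,\proy_\pg[X,Y]=\proy_\pg D[X,Y]=[D_\pg X,Y]_\pg+[X,D_\pg Y]_\pg$, using that $D$ preserves both $\kg$ and $\pg$.

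With $D_\pg\in\Der(\pg,\lb_\pg)$ in hand, the moment map vanishes at $\lb_\pg$ in the direction of $D_\pg$: $\pi(D_\pg)\lb_\pg=0$ by the property recalled after \eqref{mmdef}. The defining formula \eqref{mmdef} then gives
\[
\tr \mm_\pg D_\pg \,=\, \tfrac14\langle \pi(D_\pg)\lb_\pg,\lb_\pg\rangle \,=\, 0.
\]
For the Killing form part, invariance of $\kf$ under $D$ restricted to $X,Y\in\pg$ reads $\langle \kf_\pg D_\pg X,Y\rangle + \langle \kf_\pg X,D_\pg Y\rangle=0$, i.e. $\kf_\pg D_\pg + D_\pg^t \kf_\pg=0$ as endomorphisms of $\pg$. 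Taking traces and using that $\kf_\pg$ is symmetric, together with $\tr(A)=\tr(A^t)$, yields $\tr(\kf_\pg D_\pg)=-\tr(\kf_\pg D_\pg)$, hence $\tr \kf_\pg D_\pg=0$.

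Combining the two identities and the formula $\Ricci+S(\ad_\pg H)=\mm_\pg-\tfrac12\kf_\pg$ gives the claim. I do not expect serious obstacles: the only mild subtlety is recognizing that a derivation preserving $\kg$ automatically preserves $\pg$ (since $\pg$ is chosen $\kf$-orthogonal to $\kg$), which ensures both that $D_\pg$ acts as a genuine restriction and that it is a derivation of $(\pg,\lb_\pg)$; after that the two vanishings are essentially formal consequences of the moment-map description of $\mm_\pg$ and of derivation-invariance of $\kf$.
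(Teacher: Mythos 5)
Your proof is correct and takes essentially the same route as the paper's: rewrite $\Ricci + S(\ad_\pg H) = \mm_\pg - \tfrac12\kf_\pg$, kill the $\mm_\pg$-term via \eqref{mmdef} and $\pi(D_\pg)\lb_\pg = 0$, and kill the $\kf_\pg$-term via derivation-invariance of the Killing form. The only additional content is your (correct) verification that $D$ preserves $\pg$ and that $D_\pg\in\Der(\pg,\lb_\pg)$, facts the paper records without proof.
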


\begin{proof}
The $\Ad(G)$-invariance of the Killing form implies that $\kf_\pg D_\pg + D_\pg^t \kf_\pg = 0$. This, together with equation \eqref{mmdef} and the fact that $\pi(D_\pg)\lb_\pg = 0$ imply that $\tr \left( \mm_\pg - \unm \kf_\pg \right) D_\pg =0$, as claimed.
\end{proof}

In \cite{standard}, a very powerful tool to tackle problems related with the Ricci curvature of homogeneous manifolds was introduced. This tool was further developed in \cite{einsteinsolv, solvsolitons, alek}, and we present it here in a brief version; we refer the reader to the Appendix in \cite{alek} for a more gentle presentation of these topics.

Let $\ngo\subseteq \pg$ be the nilradical of $\ggo$ (which is in $\pg$ since it lies in the radical of the Killing form), and let $n = \dim \ngo$. Fixing an appropriate orthonormal basis for $\ngo$ (see Remark \ref{base} below), we can view its Lie bracket $\lb_\ngo$ as an element of the vector space $V = \Lambda^2 \ngo^*  \otimes \ngo$. Recall that there is a natural $\Gl_n(\RR)$-action on this vector space. Coming from deep results in geometric invariant theory, there is a $\Gl_n(\RR)$-invariant stratification of $V$ given by the disjoint union
\[
    V \setminus \{ 0\} = \bigcup_{\beta\in \mathcal{B}} \mathcal{S}_\beta
\]
where the strata $\mathcal{S}_\beta$ are indexed by a finite set of diagonal matrices $\mathcal{B}$ whose trace is $-1$. Thus, if $\lb_\ngo \neq 0$ we can associate to it an operator $\beta$, and this operator has many interesting properties which we summarize in the following theorem.

\begin{theorem}\cite{standard, einsteinsolv}\label{git} Let $(\ngo,\lb_\ngo,\ip_\ngo)$ be a nilpotent Lie algebra endowed with an inner product. There exists an orthonormal basis of $\ngo$ such that the following holds: If $\lb_\ngo \neq 0$, let $\beta\in \End(\ngo)$ be the associated symmetric map, with $\tr \beta = -1$ (i.e.~ $\lb_\ngo \in \mathcal{S}_\beta$). If $\ngo$ is abelian, formally take $\beta = \infty$, and assume by convention that the expression $\beta/\|\beta\|^2$ is $0$ in this case. Then, the symmetric map $\beta/\|\beta\|^2  + I$ satisfies the following properties:
\begin{itemize}
    \item[(i)] $\tr \left( \left(\beta/\|\beta\|^2+ I\right) [D,D^t] \right) \geq 0, \quad \forall D\in \Der(\ngo, \lb_\ngo)$   (equality holds if and only if $\left[ \beta/\|\beta\|^2  + I, D\right] = 0$).
    \item[(ii)] $\beta/\|\beta\|^2  + I$ is positive definite.
    \item[(iii)] $\|\beta\| \leq \| m(\lb_\ngo)\|$   (equality holds if and only if $m(\lb_\ngo)$ is conjugate to $\beta$).
    \item[(iv)] $\tr \left(\beta/\|\beta\|^2  + I\right) D = \tr D, \quad \forall D\in \Der(\ngo, \lb_\ngo)$.
    \item[(v)] $\left\langle \pi\left(\beta/\|\beta\|^2  + I\right) \lb_\ngo, \lb_\ngo \right\rangle \geq 0$, with equality if and only if $\beta/\|\beta\|^2  + I \in \Der(\ngo,\lb_\ngo)$.
\end{itemize}
\end{theorem}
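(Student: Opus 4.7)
The plan is to derive all five properties from the Kirwan--Ness stratification theory for real reductive group actions, specialized to the $\Gl_n(\RR)$-action on $V = \Lambda^2 \ngo^* \otimes \ngo$. The needed pieces are essentially in \cite{standard, einsteinsolv}; I would reassemble them here.

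First, I would set up the stratification. For $v = \lb_\ngo \in V \setminus \{0\}$, study the function $F_v(g) = \|g\cdot v\|^2$ on $\Gl_n(\RR)$; along symmetric one-parameter subgroups $\exp(tE)$ it is log-convex in $t$. A classical argument (Kirwan, Ness, Richardson--Slodowy in the real setting) produces a canonical symmetric $\beta$ with $\tr \beta = -1$ which maximizes the asymptotic decay rate of $\|\exp(tE)\cdot v\|^2$ among unit-norm symmetric $E$'s; this $\beta$ depends only on the orbit closure, and the strata $\mathcal{S}_\beta$ are labeled by the $\Or(n)$-conjugacy classes of such $\beta$'s. Property (iii) is exactly this minimization: $\|\beta\|$ equals $\inf_{g\in\Gl_n}\|m(g\cdot v)\|$, attained at a closed orbit; equality with $\|m(v)\|$ means the infimum is already realized at $v$, which by uniqueness up to $\Or(n)$ forces $m(v)$ to be conjugate to $\beta$.

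Next I would attack (iv), (v) and (i), exploiting that $\Der(\ngo,\lb_\ngo) = \Lie(\Aut(\ngo))$ so $\pi(D)v = 0$ for every $D\in \Der$. A direct computation using the block-triangular structure of $D$ with respect to the eigenspace decomposition of $\beta$ (forced by the derivation condition combined with the weight structure of $V$) gives $\tr(\beta D) = 0$, which is exactly (iv). Property (v) follows because the stratum definition makes $\exp(t\beta/\|\beta\|^2)$ contract $v$ at asymptotic rate $e^{-t}$: more precisely, $\pi(\beta/\|\beta\|^2)v = -v + w$ with $w$ in the sum of weight spaces of strictly larger $\beta$-weight than $-\|\beta\|^2$, so $\langle \pi(\beta/\|\beta\|^2 + I)v, v\rangle = \langle w, v\rangle \geq 0$, with equality iff $w=0$, i.e.\ iff $\beta/\|\beta\|^2 + I \in \Der$. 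Property (i) is a second-variation argument: at the critical point $v$ of $\|\cdot\|^2$ along the $\beta$-flow, the Hessian in a derivation direction $D$ evaluates to $\tr((\beta/\|\beta\|^2 + I)[D, D^t])$, whose non-negativity reflects the convexity of the squared moment map norm, with equality precisely when $D$ commutes with $\beta$.

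The hardest step is property (ii), positive-definiteness of $\beta/\|\beta\|^2 + I$. This is where the specific representation and the nilpotency of $\ngo$ enter decisively: the weights of $\Gl_n$ on $V$ (relative to a torus diagonalizing $\beta$) are of the form $\alpha_i + \alpha_j - \alpha_k$ in terms of the weights of the standard representation on $\ngo$, and the explicit construction of $\beta$ in \cite{einsteinsolv} writes it as the minimum-norm element of the convex hull of those weight vectors supported on the nonzero structure constants of $\lb_\ngo$. I would verify that this convex-hull characterization, together with the Jacobi identity and nilpotency (which force $\ad X$ to be nilpotent and produce triangular constraints on the $\alpha_i$'s), implies $\lambda_{\min}(\beta) \geq -\|\beta\|^2$; strict positivity would then come from the nondegeneracy coming from $\lb_\ngo \neq 0$, together with the observation that equality $\lambda_{\min}(\beta) = -\|\beta\|^2$ is ruled out by the extremal characterization of $\beta$ within its convex hull.
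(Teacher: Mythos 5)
A preliminary remark: the paper offers no proof of Theorem \ref{git} at all — it is imported wholesale from \cite{standard, einsteinsolv} (Lauret's stratification of the variety of nilpotent Lie brackets) and used as a black box — so there is no internal proof to compare against, and your proposal is really an attempt to reconstruct those references. You do identify the correct framework: the Kirwan--Ness--Hesselink stratification for the $\Gl_n(\RR)$-action on $V=\Lambda^2\ngo^*\otimes\ngo$, the ``canonical position'' of $\lb_\ngo$ relative to $\beta$, and the description of $\beta$ as the minimal-norm point of a convex hull of weights. Within that framework your treatment of (iii) and (v) is essentially the standard one: (v) amounts to the inequality $\langle\alpha,\beta\rangle\geq\|\beta\|^2$ for every weight $\alpha$ in the support of $\lb_\ngo$, which is exactly what canonical position gives.

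The proposal has genuine gaps at the two hardest items. For (i) you appeal to ``a second-variation argument at the critical point $v$ of $\|\cdot\|^2$ along the $\beta$-flow'' and to ``convexity of the squared moment map norm''. But the inequality is asserted for \emph{every} bracket in the stratum in canonical position, and such a bracket is in general not a critical point of $\|m\|^2$; moreover $\Der(\ngo,\lb_\ngo)$ does not transfer to the limiting critical point of the gradient flow, so no Hessian computation at a critical point can yield the statement for all $D\in\Der(\ngo,\lb_\ngo)$. This item is the main technical theorem of \cite{standard}, proved there by decomposing $D$ into its components relative to the $\beta$-eigenspace grading and running a delicate estimate; it is not a formal consequence of convexity. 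For (ii) you explicitly defer the verification (``I would verify that\ldots''), so nothing is established, and the exclusion of the equality case $\lambda_{\min}(\beta)=-\|\beta\|^2$ does not visibly follow from ``the extremal characterization''. Finally, the block-triangularity of an arbitrary derivation with respect to the eigenspaces of $\beta$, on which your proof of (iv) rests, is itself an unproved (and nontrivial) claim.
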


\begin{remark}\label{base}
The precise way in which the orthonormal basis for $\ngo$ has to be chosen is so that the technical condition $\beta_{\lb_\ngo} = \beta$ is satisfied; see \cite[Appendix]{alek}, and recall that the $\Gl_n(\RR)$ action on Lie brackets is simply given by the change of basis.
\end{remark}

We define the operator $E_\beta$ on $\pg = \hg \oplus \ngo$ as follows:
\[
E_\beta =
\left\{
  \begin{array}{ll}
    \minimatrix{0}{}{}{\beta/\|\beta\|^2 +  I }, & \hbox{if } \beta\neq \infty \\
    \\
    \minimatrix{0}{}{}{I}, & \hbox{if } \beta = \infty.
  \end{array}
\right.
\]

The following result is essentially \cite[Lemma 2.3]{alek}, but we are stating it here in a different and slightly more general way, so we provide a proof for completeness.

\begin{lemma}\cite{alek}\label{pie} $\tr \left(\Ricci + S(\ad_\pg H)\right) E_\beta \geq 0$, with equality if and only if $\left[\begin{smallmatrix} 0 & 0\\ 0 & E_\beta \end{smallmatrix}\right] \in \Der(\ggo)$.
\end{lemma}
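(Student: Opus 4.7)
The plan is to reduce the trace to a moment-map quantity and then split it over the orthogonal decomposition $\pg=\hg\oplus\ngo$.

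First, I would unfold the Ricci formula. Since $\Ricci + S(\ad_\pg H) = \mm_\pg - \unm \kf_\pg$, the trace we care about equals $\tr \mm_\pg E_\beta - \unm \tr \kf_\pg E_\beta$. The second term vanishes: $\ngo$ lies in the radical of $\kf$ so $\kf_\pg$ annihilates $\ngo$, and by definition $E_\beta$ has image in $\ngo$, hence $\tr \kf_\pg E_\beta = \tr (\kf_\pg E_\beta) = 0$. By the moment-map identity \eqref{mmdef}, what remains is $\tr \mm_\pg E_\beta = \tfrac{1}{4}\langle \pi(E_\beta)\lb_\pg,\lb_\pg\rangle$, so non-negativity reduces to controlling this pairing.

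Next, using that $\ngo\subseteq\pg$ is an ideal of $\ggo$ (so $[\pg,\ngo]\subseteq\ngo$), I would split $\langle \pi(E_\beta)\lb_\pg,\lb_\pg\rangle$ according to the splitting $\pg=\hg\oplus\ngo$ into three contributions, corresponding to arguments in $\hg\times\hg$, $\hg\times\ngo$, and $\ngo\times\ngo$. Writing $F:=\beta/\|\beta\|^2+I\in\End(\ngo)$, a direct computation (using that $E_\beta$ vanishes on $\hg$ and that brackets involving $\ngo$ already lie in $\ngo$) gives: (a) on $\ngo\times\ngo$ one obtains exactly $\langle\pi(F)\lb_\ngo,\lb_\ngo\rangle$; (b) on $\hg\times\hg$ one obtains $\sum_{i<j\le h}\langle F[e_i,e_j]_\ngo,[e_i,e_j]_\ngo\rangle$; (c) on $\hg\times\ngo$, using $D_X:=\ad_X|_\ngo\in\Der(\ngo,\lb_\ngo)$ for $X\in\hg$, a cyclic-trace manipulation of $\sum_j\langle[F,D_X]e_j,D_X e_j\rangle$ collapses to $\tr(F[D_X,D_X^t])$.

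Non-negativity of the three pieces now follows term by term from Theorem \ref{git}: piece (a) by property (v), piece (b) by the positive-definiteness (property (ii)), and piece (c) by property (i) applied to each $D_X$. Adding up yields the inequality $\tr(\Ricci+S(\ad_\pg H))E_\beta\ge 0$, and equality forces each piece to vanish, giving simultaneously: $F\in\Der(\ngo,\lb_\ngo)$, $[\hg,\hg]_\ngo=0$, and $[F,\ad_X|_\ngo]=0$ for every $X\in\hg$.

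Finally, I would translate this equality package into the statement that $\tilde E_\beta:=\left[\begin{smallmatrix}0&0\\0&E_\beta\end{smallmatrix}\right]\in\Der(\ggo)$ by checking the Leibniz rule on each pair from $\ggo=\kg\oplus\hg\oplus\ngo$: the $(\kg,\kg)$ and $(\kg,\hg)$ cases reduce to $\Ad(K)$-invariance of $\ngo$ and $\hg$ (using that $\ngo$ is an ideal and $\ad(\kg)$ is skew on $\pg$, so it preserves $\ngo^{\perp\,\pg}=\hg$); the $(\hg,\hg)$, $(\hg,\ngo)$, $(\ngo,\ngo)$ cases match conditions (b), (c), (a); and the remaining $(\kg,\ngo)$ case, $[F,\ad_X|_\ngo]=0$ for $X\in\kg$, should be automatic from the $\Ad(K)$-equivariance of the stratum label $\beta$ (since $\Ad(K)$ preserves $\lb_\ngo$ and the basis of Remark \ref{base} is chosen so that $\beta_{\lb_\ngo}=\beta$). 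The main obstacle I expect is precisely this last point: making rigorous that the GIT-theoretic $\beta$ commutes with the compact group $\Ad(K)|_\ngo$, which is the content implicit in the setup from \cite{alek} and must be invoked carefully.
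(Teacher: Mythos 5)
Your treatment of the inequality is essentially the paper's argument, carried out by hand rather than cited: after discarding the Killing-form term (the paper likewise reduces to $\tr \mm_\pg E_\beta \geq 0$ and invokes \cite[Lemma 2.3]{alek}, whose content is exactly your three-piece splitting of $\langle \pi(E_\beta)\lb_\pg,\lb_\pg\rangle$ over $\hg\times\hg$, $\hg\times\ngo$ and $\ngo\times\ngo$, controlled by parts (ii), (i) and (v) of Theorem \ref{git}), and the equality conditions you extract, namely $[\hg,\hg]_\ngo = 0$, $\beta/\|\beta\|^2+I \in \Der(\ngo,\lb_\ngo)$ and $[\beta/\|\beta\|^2+I,\ad Y|_\ngo]=0$ for all $Y\in\hg$, are precisely the ones the paper records (with $\lambda_1=0$ in place of your condition (b); your computation also degenerates correctly to the paper's formula $\tr\mm_\pg E_\beta=\tfrac14|\lambda_1|^2$ when $\lb_\ngo=0$). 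You should also state the easy converse explicitly: if $\minimatrix{0}{0}{0}{E_\beta}\in\Der(\ggo)$ then $E_\beta\in\Der(\pg,\lb_\pg)$, so $\tr\mm_\pg E_\beta=0$ by \eqref{mmdef}.

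The one genuine gap is the $(\kg,\ngo)$ Leibniz case, i.e.\ $[\beta/\|\beta\|^2+I,\ad Z|_\ngo]=0$ for $Z\in\kg$, which you propose to obtain from an equivariance of the stratum label $\beta$ under $\Ad(K)|_\ngo$. That statement is not available from Theorem \ref{git} as given (which fixes one basis and says nothing about how $\beta$ transforms under orthogonal maps preserving $\lb_\ngo$), and you flag it yourself as unresolved. It is also unnecessary: for $Z\in\kg$ the map $\ad Z|_\ngo$ is a derivation of $(\ngo,\lb_\ngo)$ because $\ngo$ is an ideal, and it is skew-symmetric by $\Ad(K)$-invariance of the metric, so $[\ad Z|_\ngo,(\ad Z|_\ngo)^t]=0$ and hence
\[
\tr\left(\left(\beta/\|\beta\|^2+I\right)\left[\ad Z|_\ngo,(\ad Z|_\ngo)^t\right]\right)=0;
\]
the equality clause of Theorem \ref{git}, (i) then forces $[\beta/\|\beta\|^2+I,\ad Z|_\ngo]=0$ directly. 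This is exactly how the paper closes the argument, and with this substitution your proof is complete and coincides with the paper's.
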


\begin{proof}
Since $\kf_\pg|_\ngo = 0$, we only have to prove that $\tr \mm_\pg E_\beta \geq 0$. The case $\lb_\ngo \neq 0$ is precisely the content of Lemma 2.3 in \cite{alek}. If $\lb_\ngo = 0$, arguing as in the proof of that lemma we see that
\[
\tr \mm_\pg E_\beta = \tfrac14 |\lambda_1|^2 \geq 0,
\]
where $\lambda_1 : \hg \times \hg \raw \ngo$ is the restriction of $\lb$ to $\hg\times \hg$ projected onto $\ngo$.

Concerning equality, if $\minimatrix{0}{0}{0}{E_\beta} \in \Der(\ggo)$ then $E_\beta \in \Der(\pg,\lb_\pg)$ and thus $\tr \mm_\pg E_\beta = 0$ by \eqref{mmdef}. Conversely, assume that equality holds. In the case $\lb_\ngo\neq 0$ we see from the proof of Lemma 2.3 in \cite{alek} and Theorem \ref{git} that we must have
\[
 \lambda_1 = 0, \quad \beta/\|\beta\|^2 + I \in \Der(\ngo, \lb_\ngo), \quad [\beta/\|\beta\|^2 + I, \ad Y|_\ngo] = 0, \qquad \forall Y \in \hg.
\]
These conditions imply that $E_\beta \in \Der(\pg, \lb_\pg)$. But we also observe that
\[
\tr [\beta, \ad Z |_\ngo] \ad Z |_\ngo = 0, \qquad \forall Z \in \kg,
\]
since $\beta$ is symmetric and $\ad Z |_\ngo$ is skew-symmetric. Hence by Theorem \ref{git}, (i) we deduce that $[E_\beta,\ad \kg|_\pg] = 0$, and this gives us $\minimatrix{0}{0}{0}{E_\beta} \in \Der(\ggo)$.

The case of equality when $\lb_\ngo = 0$ is completely analogous, because we also have that $\lambda_1 = 0$, and conditions $[E_\beta |_\ngo,\ad \hg |_\ngo] = 0$, $E_\beta |_\ngo \in \Der(\ngo,\lb_\ngo)$ and $[E_\beta,\ad \kg|_\pg] = 0$ are trivially satisfied.
\end{proof}

Finally, another interesting property of the Ricci operator of certain Riemannian homogeneous spaces is the following lemma, which is crucial in the proof by M.~Jablonski of the fact that all homogeneous Ricci solitons are algebraic. We notice that it holds in a more general context than homogeneous Ricci solitons. Namely, one only needs an orthogonal semidirect sum decomposition $\ggo = \ug \ltimes \ngo$, where $\ngo$ is the nilradical of $\ggo$.

\begin{lemma}\cite[Lemma 4]{Jbl13b}\label{algebraic}
Let $(G/K,g)$ be a Riemannian homogeneous space such that $\ggo$ admits a semidirect sum decomposition $\ggo = \ug\ltimes \ngo$, where $\ngo$ is the nilradical of $\ggo$, $\ug = \kg \oplus \hg$ is thus a reductive subalgebra, $\pg = \hg \oplus \ngo$ is an $\Ad(K)$-invariant complement for $\kg$ in $\ggo$, and $\hg \perp \ngo$ with respect to the inner product induced by $g$ on $\pg$. Then, for any $Y\in \hg$,
\[
\tr \minimatrix{\ad Y|_\hg}{0}{0}{0} \Ricci = 0.
\]
\end{lemma}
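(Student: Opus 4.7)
The plan is to expand $\tr(D_Y \Ricci)$ using $\Ricci = \mm_\pg - \unm \kf_\pg - S(\ad_\pg H)$ and to exploit a strong block-diagonal structure forced by the semidirect product $\ggo = \ug \ltimes \ngo$. The first key observation is that $\ad_\pg Y$ is itself block-diagonal on $\pg = \hg \oplus \ngo$: since $Y \in \hg \subseteq \ug$ with $\ug$ a subalgebra and $\ngo$ an ideal of $\ggo$, one has $[Y,\hg] \subseteq \ug = \kg \oplus \hg$ (no $\ngo$-component) and $[Y,\ngo] \subseteq \ngo$, so
\[
\ad_\pg Y \;=\; \minimatrix{A}{0}{0}{D_0},
\]
where $A = \proy_\hg \ad Y|_\hg$ is precisely the $\hg$-block appearing in $D_Y$ and $D_0 = \ad Y|_\ngo \in \Der(\ngo,\lb_\ngo)$. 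Hence $D_Y = \ad_\pg Y - \wt{D_0}$, where $\wt{D_0}$ acts as $D_0$ on $\ngo$ and by $0$ on $\hg$.

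I would next establish two auxiliary block-diagonal facts. First, $\kf_\pg = \minimatrix{K}{0}{0}{0}$ with $K = \kf|_{\hg\times\hg}$, because $\kf(\ngo,\ggo) = 0$. Second, $H \in \hg$: for any $X \in \ngo$ the operator $\ad X$ maps $\ggo$ into $\ngo$ and is nilpotent on $\ngo$ by Engel's theorem, so $\tr \ad X = 0$; this forces $H \perp \ngo$ and hence $H \in \hg$. It follows that $\ad_\pg H$, and therefore $S(\ad_\pg H)$, are also block-diagonal on $\pg = \hg \oplus \ngo$. Consequently all three summands of $\Ricci$ interact with $D_Y$ only through their $(\hg,\hg)$-blocks.

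With these reductions in hand, the three contributions to $\tr(D_Y \Ricci)$ are handled separately. The Killing-form term $\tr(D_Y \kf_\pg) = \tr(AK)$ vanishes by the same argument used in the proof of Lemma \ref{ricderiv}: $\ad Y$-invariance of $\kf$ combined with $\kf(\kg,\pg)=0$ yields $KA + A^t K = 0$, and the symmetric/skew trace trick gives $\tr(KA) = 0$. The moment-map term is rewritten via \eqref{mmdef} as $\unc \la \pi(D_Y)\lb_\pg, \lb_\pg \ra$; expanding $\pi(D_Y)\lb_\pg = \pi(\ad_\pg Y)\lb_\pg - \pi(\wt{D_0})\lb_\pg$ and applying Jacobi in $\ggo$, one sees that the failure of $\ad_\pg Y$ to be a derivation of $\lb_\pg$ is supported on $\kg$-components of brackets (which are absent from $\lb_\pg$ itself), while the failure of $\wt{D_0}$ comes entirely from the $\hg \times \ngo$ interaction. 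The last term $\tr(D_Y \, S(\ad_\pg H)) = \tr(A\cdot S(A_H))$, with $A_H = \proy_\hg \ad H|_\hg$, is treated by a parallel direct computation using the block-diagonal structure of $\ad_\pg H$ and the bracket identities for $[Y,H]$.

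The principal obstacle is the moment-map piece: neither $\pi(\ad_\pg Y)\lb_\pg$ nor $\pi(\wt{D_0})\lb_\pg$ is individually zero, and their contributions split across the bracket types $(\hg,\hg)$, $(\hg,\ngo)$ and $(\ngo,\ngo)$. The cancellation emerges only after summing over all blocks and invoking the $\Ad(K)$-invariance of $\ip$, which forces $\ad Z|_\pg$ to be skew-symmetric for every $Z \in \kg$ and so converts the leftover $\kg$-bracket corrections into terms that either cancel pairwise or combine with the $S(\ad_\pg H)$ contribution. Once this is pushed through, all three pieces sum to zero and yield $\tr(D_Y \Ricci) = 0$.
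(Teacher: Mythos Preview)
The paper does not actually prove this lemma: it is stated with the citation \cite[Lemma 4]{Jbl13b} and no proof is given, so there is no in-paper argument to compare your plan against.

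As for the plan itself, the preliminary reductions are fine: the block-diagonal form of $\ad_\pg Y$ on $\pg=\hg\oplus\ngo$, the fact that $\kf_\pg$ lives only on the $\hg$-block (hence $\tr(A K)=0$ via $KA+A^tK=0$), and the observation $H\in\hg$ (so $S(\ad_\pg H)$ is block-diagonal) are all correct and are exactly the kind of structural facts one needs. The genuine gap is the moment-map term. You yourself flag it as the ``principal obstacle'' and then describe only a hoped-for cancellation: that the $\kg$-correction terms coming from $\pi(\ad_\pg Y)\lb_\pg$ and the $\hg\times\ngo$ defects from $\pi(\wt{D_0})\lb_\pg$ will, after invoking skew-symmetry of $\ad Z|_\pg$ for $Z\in\kg$, either cancel in pairs or be absorbed by the $S(\ad_\pg H)$ contribution. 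None of this is carried out, and it is precisely here that the work lies. In particular, $\ad Y$ does \emph{not} preserve $\kg$ (one has $[Y,\kg]\subseteq\hg$, not $\kg$), so Lemma~\ref{ricderiv} is not directly available for $\ad_\pg Y$, and neither $\ad_\pg Y$ nor $\wt{D_0}$ is a derivation of $(\pg,\lb_\pg)$; the defect terms must be written down explicitly and matched. Until that computation is actually performed, the argument is a heuristic outline rather than a proof.

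If you want to complete it along these lines, the cleanest route is to use the reductivity of $\ug$: write the $\hg$-block of $\Ricci$ in terms of the Ricci operator of the reductive homogeneous space $(U/K,\,g|_{U/K})$ plus a correction coming from $\ad\hg|_\ngo$, and then use that for $Y\in\hg$ one has $\tr_\hg(\ad Y|_\hg)=0$ (since $\ug$ is reductive) together with $\ad$-invariance on the semisimple part. That is essentially how Jablonski organizes the cancellation; without some such structural input the raw $\pi(D_Y)\lb_\pg$ bookkeeping is unlikely to close up on its own.
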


\subsection{One dimensional extension of a Riemannian homogeneous space}\label{onedimext}

Given a Riemannian homogeneous space $(G/K,g)$, a derivation $D\in \Der(\ggo)$ that preserves $\kg$, and $\alpha \in \RR$, its \emph{one-dimensional extension} is defined as the Riemannian homogeneous space $(\tilde{G}/K, \tilde{g})$, where $\tilde{G} = \RR \ltimes G$ is the semidirect product whose Lie algebra is given by $\tilde\ggo = \RR \ltimes \ggo$, and the adjoint action of a distinguished element $\xi \in \RR$ on $\ggo$ is given by the derivation $\alpha D$. In addition, the $\Ad(K)$-invariant inner product induced by $g$ on $\pg$ extends to an $\Ad(K)$-invariant inner product on $\tilde{\pg} = \RR \xi \oplus \pg$, by making that decomposition orthogonal and requiring $\xi$ to be of unit norm. This inner product on $\tilde{\pg}$ defines a $\tilde{G}$-invariant metric $\tilde{g}$ on $\tilde{G}/K$.

This construction has been used in the case of solvable Lie groups in \cite{soliton}, and more recently in \cite{alek} and \cite{HePtrWyl}, to establish a close link between Einstein homogeneous manifolds and algebraic solitons. We refer the reader to \cite[\S 2.2]{HePtrWyl} for the calculations of the curvature of the extension in terms of the curvature of the original manifold and the data $D$, $\alpha$.

\subsection{Algebraic solitons}

Homogeneous Ricci solitons are self-similar solutions to the Ricci flow which are also homogeneous Riemannian manifolds. They are generalizations of homogeneous Einstein metrics, and in fact they share many interesting properties with those metrics. Recently, these spaces have been extensively studied (see for instance \cite{Jbl}, \cite{homRS}, \cite{alek}, \cite{Jbl13b}, \cite{HePtrWyl}, \cite{lowdim}, among many others).

A homogeneous Ricci soliton $(M,g)$ is called and \emph{algebraic soliton} with respect to a transitive group of isometries $G$ if for its presentation as a homogeneous space $(M,g) = (G/K, g)$ we have that the Ricci operator (i.e.~ the 1-1 Ricci tensor) at the point $eK$ satisfies
\[
    \Ricci = cI + D_\pg, \qquad D\in \Der(\ggo)
\]
where $\pg$ denotes an $\Ad(K)$-invariant complement for $\kg = \Lie(K)$ in $\ggo = \Lie(G)$. This algebaric condition is very nice and useful to construct examples, and also to obtain structural properties. Remarkably, by using the structural results obtained in \cite{alek} together with a brand new approach, it was recently shown in \cite{Jbl13b} that all homogeneous Ricci solitons are indeed algebraic with respect to its isometry group.

\section{Proof of Theorem \ref{main}}\label{proof}

We proceed in this section with the proof of our main result. First observe that if the $(\lambda,n+m)$-Einstein manifold is \emph{trivial} (i.e.~if it is Einstein), the theorem follows immediately from \cite[Proposition 6.1 (ii)]{alek}, so we will focus on the \emph{non-trivial} case from now on.

First let us recall the following construction for non-trivial homogeneous $(\lambda,n+m)$-Einstein manifolds, in which our proof is mainly based.

\begin{theorem}\cite[Theorem 5.1]{HePtrWyl}\label{thm51}
Let $(\tilde{M} = \tilde{G}/\tilde{K},\tilde{g})$ be a homogeneous $(\lambda,n+m)$-Einstein space which is not Einstein. Then, $(\tilde{M},\tilde{g})$ is the one-dimensional extension of a homogeneous space $(M = G/K,g)$ with a derivation $D\in \Der(\ggo)$ and $\alpha^2 = \frac1{\tr S(D_\pg) - \lambda m}$, satisfying the following conditions:
\begin{itemize}
    \item[(1)] $\Ricci_g = \lambda I + S(D_\pg) + \frac1{\tr S(D_\pg) - \lambda m} [S(D_\pg), A(D_\pg)],$
    \item[(2)] $\divg S(D_\pg) = 0,$
    \item[(3)] $\tr S(D_\pg)^2 = -\lambda \tr S(D_\pg)$,
\end{itemize}
where $\lambda < 0 $. Moreover, the warping function $w \in \mathcal{C}^\infty(\tilde{M})$ is given by $w(r) = e^{\lambda \alpha r}$, where $r$ is the signed distance function on $\tilde{M}$ to the hypersurface $M$.
\end{theorem}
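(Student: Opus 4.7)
The proof splits into a trivial (Einstein) case and a non-trivial case. If $(\tilde{M},\tilde{g})$ is itself Einstein, Theorem \ref{main} follows from \cite[Proposition 6.1(ii)]{alek}, so from now on the plan focuses on the non-trivial case. There, Theorem \ref{thm51} reduces the problem to analyzing the base $(M,g) = (G/K,g)$ together with the derivation $D \in \Der(\ggo)$ produced there. Writing $S = S(D_\pg)$, $A = A(D_\pg)$ and $\mu = \tr S - \lambda m > 0$, condition (1) of Theorem \ref{thm51} reads
\[
\Ricci_g = \lambda I + S + \tfrac{1}{\mu}[S,A],
\]
so proving Theorem \ref{main} amounts to exhibiting a symmetric derivation $D' \in \Der(\ggo)$ preserving $\kg$ with $\Ricci_g = \lambda I + D'_\pg$. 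The plan is to establish (a) $[S,A] = 0$ and (b) $S$ itself is the $\pg$-projection of such a derivation; together these collapse condition (1) to the algebraic soliton equation on $(M,g)$.

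The first step is a pair of trace identities. Since $[S,A]$ is symmetric and $\tr[S,A]\cdot S = \tr S^2 A - \tr A S^2 = 0$ by cyclicity, condition (1) gives $\tr \Ricci_g \cdot S = \lambda \tr S + \tr S^2$, which vanishes by condition (3). Applying Lemma \ref{ricderiv} to $D$ and using that $\Ricci_g$ and $S(\ad_\pg H)$ are symmetric, this then yields $\tr S(\ad_\pg H)\cdot S = 0$ as well.

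The core of the proof will be a sandwich estimate built around the operator $E_\beta$ from Section \ref{riccicurvature}. Lemma \ref{pie} gives
\[
\tr(\Ricci_g + S(\ad_\pg H)) E_\beta \geq 0,
\]
with equality precisely when the zero-extension of $E_\beta$ to $\pg$ is a derivation of $\ggo$. The plan is to substitute condition (1) into the left-hand side and bound the result from above by zero, using: Lemma \ref{algebraic} to annihilate the $\hg$-contribution in the $\ad_\pg H$ term (recall $\pg = \hg \oplus \ngo$); properties (iv)--(v) of $\beta/\|\beta\|^2 + I$ from Theorem \ref{git} to convert traces against $E_\beta$ into ordinary traces controlled by condition (3); and the two trace vanishings above to handle the $S$-terms. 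Forcing equality in Lemma \ref{pie} then produces a symmetric derivation $D^\sharp \in \Der(\ggo)$ whose $\pg$-part is $E_\beta$, placing us in the setting of Lemma \ref{algebraic} and Theorem \ref{git}(i) with sharp constraints.

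Once $D^\sharp$ is available, the plan is to combine the equality case of Theorem \ref{git}(i) with the divergence-free condition (2) on $S$ (which for a $G$-invariant symmetric tensor on a homogeneous space translates into commutation constraints between $S$ and brackets with elements of $\pg$, in the spirit of \cite[\S 3]{alek}) to force $[S,A] = 0$ and to exhibit $S$ as the $\pg$-part of a symmetric derivation of $\ggo$, following the pattern of the algebraicity arguments of \cite{alek, Jbl13b}. This establishes (a) and (b) and completes the proof. The principal obstacle I anticipate is precisely the reverse inequality in the sandwich step: squeezing condition (1) and the trace identities hard enough to close the gap in Lemma \ref{pie} is what requires genuinely new input beyond the Einstein case, and it is where the structure specific to $(\lambda,n+m)$-Einstein metrics must play a decisive role.
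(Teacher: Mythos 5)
Your proposal does not actually address the statement you were asked to prove. The statement is Theorem~\ref{thm51}, which is an external structure theorem quoted verbatim from \cite[Theorem 5.1]{HePtrWyl}; the paper offers no proof of it and simply imports it. What you have written is instead an outline of a proof of Theorem~\ref{main}, and it \emph{uses} Theorem~\ref{thm51} as its starting point (``Theorem~\ref{thm51} reduces the problem to analyzing the base\dots''). Nothing in your proposal engages with the actual content of Theorem~\ref{thm51}: that a non-Einstein homogeneous $(\lambda,n+m)$-Einstein space $\tilde M$ must split as a one-dimensional extension of a homogeneous hypersurface $M$, that the extension derivation $D$ and the constant $\alpha$ satisfy $\alpha^2 = 1/(\tr S(D_\pg)-\lambda m)$, that conditions (1)--(3) on $\Ricci_g$, $\divg S(D_\pg)$ and $\tr S(D_\pg)^2$ hold, that $\lambda<0$, and that $w(r)=e^{\lambda\alpha r}$. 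Establishing those facts requires analyzing the warping function $w$ itself on the homogeneous space $\tilde M$ --- e.g.\ that $\nabla w$ generates a one-parameter family of homogeneous level hypersurfaces, that the second fundamental form of a level set is governed by equation \eqref{lnm}, and that the isometries preserving a level set act transitively on it --- none of which appears in your write-up. Relative to the assigned statement the argument is therefore circular: you assume precisely what is to be proved.

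As a secondary remark, even read as a proposal for Theorem~\ref{main} your outline diverges from the paper in a way that leaves a real gap. The paper's ``reverse Cauchy--Schwarz'' step (Lemma~\ref{cuenta}) needs the inequality $\tr F^2 + \lambda \tr F \le 0$ for $F = S(\ad_\pg H + D_\pg)$, and this is \emph{not} a formal consequence of conditions (1)--(3): the paper derives it in Lemma~\ref{ineqF} by passing to the ambient one-dimensional extension $\tilde M$, computing $\Hess w$ and the mean curvature vector $\tilde H$ there, and applying Lemma~\ref{ricderiv} to the derivation $\ad(H+\xi)$ of $\tilde\ggo$. Your trace identities ($\tr \Ricci_g\, S = 0$ and $\tr S(\ad_\pg H)\, S = 0$) involve only $S(D_\pg)$, not the combination $F$ that includes $\ad_\pg H$, and they do not by themselves close the sandwich estimate around $E_\beta$. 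You correctly flag this reverse inequality as the principal obstacle, but the decisive input is exactly the geometric information carried by the warped-product extension, i.e.\ by the theorem you were supposed to be proving.
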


Our main aim is to prove that the conditions on the Ricci curvature given in that theorem imply that the homogeneous space $(G/K,g)$ is actually an algebraic soliton. To do that, we establish a series of technical lemmas that basically imitate the proof of the main structural results for homogeneous Ricci solitons given in \cite{alek}, and after that we use a result of Jablonski (Lemma \ref{algebraic}) to conclude that the derivation used to build the one-dimensional extension must indeed be normal.


We begin with an analogous of \cite[Lemma 4.2]{alek}. The proof, however, is slightly different in this case, and makes heavy use of the fact that there exists a $(\lambda,n+m)$-Einstein one-dimensional extension.

\begin{lemma}\label{ineqF}
Let $(M=G/K,g)$ be a Riemannian homogeneous space with reductive decomposition $\ggo = \kg \oplus \pg$, mean curvature vector $H\in \pg$, $p = eK \in G/K$, and assume that it admits a one-dimensional extension by a derivation $D\in \Der(\ggo)$ and a constant $\alpha\in \RR$ which is a $(\lambda,n+m)$-Einstein manifold $(\tilde{M} = \tilde{G}/K,\tilde{g})$ with warping function $w(r) = e^{\lambda \alpha r}$, where $r$ is the signed distance function on $\tilde{M}$ to the hypersurface $M$. Also, let $F = S(\ad_\pg H + D_\pg) \in \End(T_p M)$. Then,
\[
\tr F^2 + \lambda \tr F \leq 0,
\]
with equality if and only if $D(H) = 0$.
\end{lemma}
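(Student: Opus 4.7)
My approach is to reduce $\tr F^2 + \lambda\tr F$ to a simpler quantity using the algebraic conditions (1)--(3) of Theorem \ref{thm51} together with Lemma \ref{ricderiv}, and then to close the inequality using the divergence condition (2). Expanding $F = S(\ad_\pg H) + S(D_\pg)$, the key algebraic observation is that $\tr\Ric_g\, S(D_\pg) = 0$: indeed, by (1) this trace equals $\lambda\tr S(D_\pg) + \tr S(D_\pg)^2 + \alpha^2\tr[S(D_\pg), A(D_\pg)]\,S(D_\pg)$; the commutator trace vanishes by cyclicity, and the first two terms cancel by (3). Next, Lemma \ref{ricderiv} applied to $D$ (which preserves $\kg$) reads $\tr(\Ric_g + S(\ad_\pg H))\,D_\pg = 0$; since the skew part of $D_\pg$ pairs to zero against symmetric operators, this simplifies to $\tr S(\ad_\pg H)\, S(D_\pg) = 0$. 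Substituting these two vanishing identities into the expansion of $\tr F^2 + \lambda\tr F$, together with (3), yields
\[
\tr F^2 + \lambda\tr F \;=\; \tr S(\ad_\pg H)^2 + \lambda|H|^2.
\]

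It then remains to prove $\tr S(\ad_\pg H)^2 + \lambda|H|^2 \le 0$, with equality if and only if $D(H)=0$. For this step I would invoke the standard divergence formula on a Riemannian homogeneous space: for any $G$-invariant symmetric operator $T\colon\pg\to\pg$,
\[
(\divg T)(X) \;=\; \langle TH, X\rangle - \tr(T\,\ad_\pg X), \qquad X\in\pg.
\]
Applied with $T = S(D_\pg)$ together with (2), this gives the functional identity $\langle S(D_\pg) H, X\rangle = \tr S(D_\pg)\,\ad_\pg X$ for all $X\in\pg$. Combined with (1), with $\divg\Ric_g = 0$ (scalar curvature is constant on a homogeneous space), and with the relation $\alpha^2 = 1/(\tr S(D_\pg) - \lambda m)$, I expect a sum-of-squares identity of the shape $\tr S(\ad_\pg H)^2 + \lambda|H|^2 = -\alpha^2|D(H)|^2$, from which both the inequality and the stated equality condition follow immediately.

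The hardest part is clearly producing the $|D(H)|^2$ defect term, with the correct sign, from the divergence identities. It seems likely that condition (2) alone is not sufficient and that one must exploit the $(\xi, X)$-component of the $(\lambda, n+m)$-Einstein equation on $\tilde M$ (the component from which (2) was originally extracted in \cite{HePtrWyl}) in a slightly more refined way, or perform a careful combination of divergence identities applied to $\Ric_g$, $S(D_\pg)$, and $S(\ad_\pg H)$. Once the sum-of-squares form is in place, the equality analysis is automatic: every reduction in the first paragraph is unconditional, so equality in the original inequality is equivalent to the vanishing of the $|D(H)|^2$ term, yielding precisely $D(H) = 0$.
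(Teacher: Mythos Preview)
Your reduction in the first paragraph is correct: using (1), (3), and Lemma \ref{ricderiv} applied to $D$, one indeed gets $\tr F^2+\lambda\tr F=\tr S(\ad_\pg H)^2+\lambda|H|^2$. But this is only bookkeeping; the entire content of the lemma is in the remaining inequality, and that is exactly the step you leave as an ``expected'' identity. Your proposed route through the divergence formula for $S(D_\pg)$ does not visibly produce a $|D(H)|^2$ defect term, and you essentially acknowledge this. Note also a structural warning sign: the quantity $\tr S(\ad_\pg H)^2+\lambda|H|^2$ does not involve $D$ at all, so recovering the equality criterion $D(H)=0$ from it will necessarily force you back into the structure equations in a nontrivial way. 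As written, the argument is incomplete precisely at the crucial point.

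The paper takes a cleaner and conceptually different route: it works on the \emph{extension} $(\tilde G/K,\tilde g)$ rather than on the base. Using the $(\lambda,n+m)$-Einstein equation and the explicit form of $w$, one computes $\widetilde{\Ricci}$ and then applies Lemma \ref{ricderiv} once, to the inner derivation $\ad_{\tilde\pg}(H+\xi)\in\Der(\tilde\ggo)$. Writing this operator in blocks relative to $\tilde\pg=\RR\xi\oplus\pg$, one has
\[
S\big(\ad_{\tilde\pg}(H+\xi)\big)=\begin{pmatrix}0&\ast\\ \ast & F\end{pmatrix},
\]
and Lemma \ref{ricderiv} yields the exact identity $0=\lambda\tr F+\tr F^2+(\text{off-diagonal contribution})$, with the off-diagonal contribution nonnegative. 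The off-diagonal block is precisely (a multiple of) $[H,\xi]_\pg=-\alpha\,D(H)_\pg$, so both the inequality and the equality case $D(H)=0$ fall out at once. This is the missing idea: passing to $\tilde\ggo$ makes the $D(H)$ defect appear \emph{geometrically} as an off-diagonal block, rather than having to be manufactured from divergence identities on the base.
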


\begin{proof}
We will follow the notation of Section \ref{onedimext}. We have that $\tilde{G} = \RR\ltimes G$, the action of $\RR$ on $G$ is induced by $\alpha$ and $D$, $\tilde{\pg} = \RR \xi \oplus \pg$ is a reductive complement for $\kg$ in $\tilde{\ggo}$, and observe that the mean curvature vector $\tilde{H}$ of the extension $(\tilde{G}/K,\tilde{g})$ is given by
\[
    \tilde{H} = (\alpha \tr S(D_\pg)) \xi + H.
\]
Thus, we have that
\begin{align}\label{SadH}
    S(\ad_{\tilde{\pg}} \tilde{H} ) &= S(\ad_{\tilde{\pg}} H) + (\alpha^2 \tr S(D_\pg))\minimatrix{0}{0}{0}{ S(D_\pg) }\\
    & =  \minimatrix{0}{\ast}{\ast}{S(\ad_\pg H)} + (\alpha^2 \tr S(D_\pg))\minimatrix{0}{0}{0}{ S(D_\pg) }, \nonumber
\end{align}
where the blocks are with respect to the decomposition $\tilde{\pg} = \RR \xi \oplus \pg$. Also, for the adjoint action of $H+\xi \in \tilde\ggo$ restricted to $\tilde\pg$ one has
\begin{equation}\label{SadHxi}
    S(\ad_{\tilde{\pg}}(H + \xi)) = \minimatrix{0}{\ast}{\ast}{F}.
\end{equation}

On the other hand, by the proof of \cite[Theorem 3.3]{HePtrWyl} we have that $\alpha^2 = 1/(\tr S(D_\pg) - \lambda m)$. Using also the formula for $\Hess w$ given in that proof we have that, at the point $p \in \tilde{M}$, the tensor $\Hess w$ is given by
\[
    \frac{m}{w} \Hess w = \langle A \cdot,\cdot\rangle, \quad \hbox{ where } \quad A = m\lambda \alpha^2 \minimatrix{\lambda}{0}{0}{- S(D_\pg)} \in \End(\tilde\pg).
\]
So, the fact that $(\tilde{M},\tilde{g})$ is $(\lambda,n+m)$-Einstein implies that its Ricci operator at $p$ satisfies
\begin{equation*}
    \widetilde{\Ricci} = \lambda I + m\lambda \alpha^2 \minimatrix{\lambda}{0}{0}{- S(D_\pg)},
\end{equation*}
and so by using \eqref{SadHxi} and the formula $\alpha^2 = 1/(\tr S(D_\pg) - \lambda m)$ we get
\begin{equation}\label{Rictilde}
     \widetilde{\Ricci} +  S(\ad_{\tilde\pg} \tilde H) = \lambda I +  \minimatrix{m\lambda^2 \alpha^2}{0}{0}{0} + S(\ad_{\tilde{\pg}}(H + \xi)).
\end{equation}
Finally, putting together equations \eqref{SadHxi}, \eqref{Rictilde} and using Lemma \ref{ricderiv} with the derivation $\ad (H+\xi) \in \Der(\tilde\ggo)$  we obtain
\begin{align*}
    0 \quad =& \quad \tr \left(\widetilde{\Ricci} + S(\ad_{\tilde\pg} \tilde H) \right) S(\ad_{\tilde\pg} ( H + \xi)) \\
    =&\quad \lambda \tr S(\ad_{\tilde\pg} ( H + \xi)) + \tr S(\ad_{\tilde\pg} ( H + \xi))^2 \\
    \geq&\quad \lambda \tr F + \tr F^2.
\end{align*}
It is clear that we have equality if and only if $S(\ad_{\tilde\pg} H) = \minimatrix{0}{0}{0}{S(\ad_\pg H)}$, which is in turn equivalent to saying that $[H,\xi] = 0$, or $D(H) = 0$.
\end{proof}

With this inequality for $F$ and the condition on the Ricci curvature given in \cite[Theorem 5.1]{HePtrWyl}, we see in the following lemmas how this spaces resemble algebraic solitons.

\begin{lemma}\label{cuenta}
Let $(G/K,g)$ be a Riemannian homogeneous space with reductive decomposition $\ggo = \kg \oplus \pg$, with $\pg = \hg \oplus  \ngo$ as in Section \ref{riccicurvature}, and assume that its Ricci operator at the point $p = eK\in G/K$ satisfies
\begin{equation}\label{conditionRic}
\Ricci_g = \lambda I + S(D_\pg)+ \alpha^2 [S(D_\pg),A(D_\pg)],
\end{equation}
where $\alpha, \lambda\in \RR$, $\lambda<0$ and $D\in \Der(\ggo)$. Furthermore, assume that $\tr F^2 + \lambda \tr F \leq 0,$ where $F = S(D_\pg + \ad_\pg H)$. Then,
\begin{itemize}
    \item[(i)] $[\hg,\hg]_\pg \subseteq \hg$, or equivalently, $\ug$ is a Lie subalgebra of $\ggo$.
    \item[(ii)] $\beta/\|\beta\|^2 + I \in \Der(\ngo)$ and $[E_\beta, \ad \hg|_\pg] = [E_\beta, D_\pg]= 0$.
    \item[(iii)] $S(D_\pg + \ad_\pg H) = -\lambda E_\beta$.
    \item[(iv)] $\ad_\pg H + D_\pg$ is a normal operator.
\end{itemize}
\end{lemma}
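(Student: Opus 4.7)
The plan is to combine three ingredients: the Ricci trace inequality of Lemma \ref{pie} applied to $E_\beta$, the quadratic estimate of Lemma \ref{ineqF} for $F = S(D_\pg + \ad_\pg H)$, and the structural fact that $D \in \Der(\ggo)$ preserves both $\kg$ and the nilradical $\ngo$, so that $D_\pg$ is lower block-triangular with respect to $\pg = \hg \oplus \ngo$. Two preliminary facts I would record first: $H \in \hg$ (since $\ad_\pg X$ is nilpotent for $X \in \ngo$, whence $\langle H,X\rangle = \tr\ad_\pg X = 0$); and $[H,\kg] = 0$ by $\Ad(K)$-invariance of $H$, so $\ad H \in \Der(\ggo)$ preserves $\kg$ and Lemma \ref{ricderiv} is applicable to both $D$ and $\ad H$.

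Writing $D_\pg = \minimatrix{D_1}{0}{D_2}{D_3}$ with $D_3 = D|_\ngo \in \Der(\ngo)$, a direct block computation yields
\[
\tr[S(D_\pg), A(D_\pg)] E_\beta \;=\; -\tfrac12 \tr(\beta/\|\beta\|^2 + I)\bigl(D_2 D_2^t + [D_3, D_3^t]\bigr) \;\leq\; 0,
\]
by Theorem \ref{git}(i)--(ii), with equality if and only if $D_2 = 0$ and $[\beta/\|\beta\|^2 + I, D_3] = 0$. Since $T_3 := (D + \ad H)|_\ngo \in \Der(\ngo)$, Theorem \ref{git}(iv) gives $\tr F E_\beta = \tr T_3$; and $\tr E_\beta^2 = \tr E_\beta$ follows from $\tr\beta = -1$ and $\tr\beta^2 = \|\beta\|^2$. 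Substituting \eqref{conditionRic} into Lemma \ref{pie} and using the commutator bound yields $\tr F E_\beta \geq -\lambda \tr E_\beta$; combining with Lemma \ref{ineqF} and Cauchy--Schwarz one obtains the chain
\[
\lambda^2 (\tr E_\beta)^2 \;\leq\; (\tr F E_\beta)^2 \;\leq\; \tr F^2 \cdot \tr E_\beta \;\leq\; -\lambda \tr F \cdot \tr E_\beta.
\]

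The main obstacle is to force equality throughout this chain. To this end I would use the auxiliary identity
\[
\tr F^2 + \lambda \tr F \;=\; -\alpha^2 \tr[S(D_\pg), A(D_\pg)]\, S(\ad_\pg H),
\]
obtained from Lemma \ref{ricderiv} applied to $D$ and to $\ad H$ (noting that the skew part $A(\ad_\pg H)$ contributes nothing, since $\tr[S(D_\pg), A(D_\pg)]Q = -\tfrac12\tr Q[D_\pg, D_\pg^t] = 0$ when $Q$ is skew-symmetric). A careful comparison with the block-triangular bound on $\tr[S(D_\pg), A(D_\pg)] E_\beta$ should pin down the Cauchy--Schwarz gap, forcing $F = cE_\beta$, and then $\tr FE_\beta = -\lambda\tr E_\beta$ identifies $c = -\lambda$, establishing (iii).

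Once every inequality is an equality, the remaining claims fall into place. Equality in Lemma \ref{pie} gives $\minimatrix{0}{0}{0}{E_\beta} \in \Der(\ggo)$; applying this derivation to $[X,Y]$ with $X, Y \in \hg$ and using positive definiteness of $E_\beta|_\ngo$ yields $[\hg,\hg]_\ngo = 0$, which is (i). Restricting the derivation to $\ngo$ gives $\beta/\|\beta\|^2 + I \in \Der(\ngo)$, and the commutation relations $[E_\beta, \ad\hg|_\pg] = 0$ and $[E_\beta, D_\pg] = 0$ follow from the derivation property applied to brackets in $\hg \times \ngo$, together with the equality conditions $D_2 = 0$ and $[\beta/\|\beta\|^2 + I, D_3] = 0$; this gives (ii). Finally (iv) is immediate: by (ii) and $H \in \hg$, the symmetric operator $E_\beta$ commutes with $T := D_\pg + \ad_\pg H$, hence also with $T^t$ and therefore with $A(T)$, so $[S(T), A(T)] = [-\lambda E_\beta, A(T)] = 0$, and $T$ is normal.
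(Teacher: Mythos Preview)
Your overall strategy is the paper's strategy: rewrite \eqref{conditionRic} so that tracing against $E_\beta$ and invoking Lemma \ref{pie} together with Theorem \ref{git}(i) yields $\tr F E_\beta \geq -\lambda \tr E_\beta$, then play this off against $\tr F^2 + \lambda \tr F \leq 0$ and Cauchy--Schwarz. Your block computation of $\tr[S(D_\pg),A(D_\pg)]E_\beta$ and your derivation of (i)--(iv) from the equality conditions are both fine.

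The gap is precisely where you flag it: your chain
\[
\lambda^2(\tr E_\beta)^2 \;\leq\; (\tr F E_\beta)^2 \;\leq\; \tr F^2\,\tr E_\beta \;\leq\; -\lambda\,\tr F\,\tr E_\beta
\]
does \emph{not} close. It only yields $\tr F \geq -\lambda \tr E_\beta$, which is no contradiction to Cauchy--Schwarz. The auxiliary identity $\tr F^2 + \lambda \tr F = -\alpha^2\tr[S(D_\pg),A(D_\pg)]\,S(\ad_\pg H)$ is correct, but it gives information about $S(\ad_\pg H)$, not about $E_\beta$, and there is no evident way to compare the two commutator traces so as to ``pin down the Cauchy--Schwarz gap''. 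As it stands this step is not a proof.

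What is actually needed is the identity $\tr F E_\beta = \tr F$. You already have $\tr F E_\beta = \tr T_3$ from Theorem \ref{git}(iv); the missing piece is that the $\hg$-block of $D_\pg + \ad_\pg H$ is traceless, i.e.\ $\tr D_1 = \tr(\ad H|_\hg) = 0$. This is \cite[Lemma 2.6]{alek} (applied to the derivations $D$ and $\ad H$), and it is exactly what the paper invokes. With $\tr F E_\beta = \tr F$ in hand one gets the \emph{reverse} Cauchy--Schwarz
\[
(\tr F E_\beta)^2 \;=\; (\tr F E_\beta)\,\tr F \;\geq\; (-\lambda \tr E_\beta)\,\tr F \;=\; \tr E_\beta^2\,(-\lambda \tr F) \;\geq\; \tr E_\beta^2\,\tr F^2,
\]
which forces equality everywhere and in particular $F = -\lambda E_\beta$. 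Insert this single fact and your argument goes through.
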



\begin{proof}
Let us rewrite the condition on $\Ricci_g$ as
\[
\unm \alpha^2 [D_\pg, D_\pg^t] + \Ricci_g + S(\ad_\pg H) = \lambda I + F.
\]
By taking traces against $E_\beta$ and using Lemma \ref{pie} and Theorem \ref{git}, (i) we obtain
\begin{equation}\label{F}
\tr (\lambda I + F) E_\beta \geq 0.
\end{equation}
Using the previous inequality, together with $\tr F^2 + \lambda \tr F \leq 0$, $\tr F E_\beta = \tr F$ (which follows from \cite[Lemma 2.6]{alek} and Theorem \ref{git}, (iv)) and $\tr E_\beta^2 =  \tr E_\beta$, we obtain that
\begin{align*}
    \left( \tr F E_\beta\right)^2 \geq& \, \,(-\lambda \tr E_\beta) \tr F = \tr E_\beta^2 (-\lambda \tr F) \\ \geq& \,\, \tr E_\beta^2 \tr F^2,
\end{align*}
which is a ``reverse'' Cauchy-Schwartz inequality. Thus, we must have equality everywhere, and conditions (i)-(iii) are easily obtained as in the proof of \cite[Proposition 4.1]{alek}. Condition (iv) follows immediately from (ii) and (iii).
\end{proof}

The previous result implies that the operators $\ad_\pg H$ and $S(D_\pg)$ have the following forms with respect to the orthogonal decomposition $\pg = \hg \oplus \ngo$:
\begin{equation}\label{formadH}
    \ad_\pg H = \minimatrix{\ad H|_\hg}{0}{0}{\ad H|_\ngo}, \qquad S(D_\pg) = \minimatrix{-S(\ad H|_\hg)}{0}{0}{S(D_\ngo)},
\end{equation}
with $S(\ad H|_\ngo) + S(D_\ngo) = -\lambda(\beta/\|\beta\|^2 +  I)$.

\begin{lemma}\label{lemmaadytder}
For any $Y\in \hg$ we have that $(\ad Y|_\ngo)^t \in \Der(\ngo)$, and $D_\ngo^t \in \Der(\ngo)$. Moreover, $\sum [(\ad Y_i)|_\ngo, (\ad Y_i)|_\ngo^t] + \alpha^2[D_\ngo, D_\ngo^t] = 0$ and $\mm_\ngo = \lambda I + F|_\ngo$, where $\mm_\ngo$ is the map defined in \eqref{mmdef} corresponding to the Lie algebra $(\ngo,\lb_\ngo,\ip|_\ngo)$.
\end{lemma}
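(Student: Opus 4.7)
The approach combines the Ricci condition \eqref{conditionRic} restricted to $\ngo$ with a Cauchy--Schwarz argument on $\mm_\ngo$. The first step is to derive a \emph{master equation} for $\mm_\ngo$. Since $\ngo$ lies in the radical, $\kf_\pg|_\ngo = 0$; by \eqref{formadH} both $\ad_\pg H$ and $D_\pg$ are block diagonal on $\pg = \hg \oplus \ngo$; and one computes $[S(D), A(D)] = -\tfrac12[D, D^t]$. Comparing the off-diagonal blocks of \eqref{conditionRic} with the formula $\Ricci = \mm_\pg - \tfrac12\kf_\pg - S(\ad_\pg H)$ shows that $\mm_\pg$ is also block diagonal. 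To compute the $\ngo$-block of $\mm_\pg$ I would apply \eqref{mmdef} to a test endomorphism $E = \minimatrix{0}{0}{0}{E_1}$ and decompose $\lb_\pg$ into its bidegree pieces $\mu_1 : \hg \wedge \hg \to \hg$, $\mu_2 : \hg \wedge \ngo \to \ngo$ and $\mu_3 = \lb_\ngo$, the $\hg \wedge \hg \to \ngo$ piece vanishing by Lemma \ref{cuenta}(i). A direct computation gives $\pi(E)\mu_1 = 0$, while the $\mu_2$-pairing contributes a cross term proportional to $\sum_i[T_i, T_i^t]$, where $T_i = (\ad Y_i)|_\ngo$ for an orthonormal basis $\{Y_i\}$ of $\hg$. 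Combining everything yields an equation of the form
\[
\mm_\ngo = \lambda I + F|_\ngo - c_1 \sum_i [T_i, T_i^t] - c_2 [D_\ngo, D_\ngo^t], \qquad c_1, c_2 > 0.
\]

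The second step is the key inequality: for any $T \in \Der(\ngo)$,
\[
\tr \mm_\ngo [T, T^t] = \tfrac14 \|\pi(T^t) \lb_\ngo\|^2 \geq 0,
\]
with equality if and only if $T^t \in \Der(\ngo)$. The proof is short: $\pi$ (the analogue of the $\Gl(\pg)$-action, now for $\Gl(\ngo)$ on $\Lambda^2 \ngo^* \otimes \ngo$) is a Lie algebra representation, so $\pi([T, T^t]) = [\pi(T), \pi(T^t)]$; its adjoint under the induced inner product is $\pi(T)^t = \pi(T^t)$; and $\pi(T)\lb_\ngo = 0$ since $T$ is a derivation. Hence $\tr \mm_\ngo [T, T^t] = \tfrac14 \langle \pi(T)\pi(T^t)\lb_\ngo, \lb_\ngo\rangle = \tfrac14 \|\pi(T^t)\lb_\ngo\|^2$.

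The final step is Cauchy--Schwarz. By Lemma \ref{cuenta}(ii) and Theorem \ref{git}(v), $\tr \mm_\ngo E_\beta|_\ngo = 0$; since $F|_\ngo = -\lambda E_\beta|_\ngo$ this also gives $\tr \mm_\ngo F|_\ngo = 0$. Pairing the master equation with $\mm_\ngo$ then yields $\tr \mm_\ngo^2 - \lambda \tr \mm_\ngo \leq 0$ via the key inequality applied to each $T_i$ and to $D_\ngo$. A plain trace of the master equation produces $\tr \mm_\ngo = \lambda/\|\beta\|^2$, and then $\tr \mm_\ngo \beta = -\lambda$ follows from $\tr \mm_\ngo E_\beta|_\ngo = 0$. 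Cauchy--Schwarz in the Hilbert--Schmidt inner product now reads
\[
\lambda^2 = (\tr \mm_\ngo \beta)^2 \leq \|\beta\|^2 \tr \mm_\ngo^2 \leq \|\beta\|^2 \lambda \tr \mm_\ngo = \lambda^2,
\]
forcing equality throughout. The Cauchy--Schwarz equality makes $\mm_\ngo$ proportional to $\beta$, and $\tr \mm_\ngo \beta = -\lambda$ pins down $\mm_\ngo = -\lambda \beta/\|\beta\|^2 = \lambda I + F|_\ngo$, proving the moment-map identity. The other equality forces $\tr \mm_\ngo [T_i, T_i^t] = 0$ and $\tr \mm_\ngo [D_\ngo, D_\ngo^t] = 0$, which by the key inequality give $(\ad Y_i|_\ngo)^t, D_\ngo^t \in \Der(\ngo)$. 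Substituting $\mm_\ngo = \lambda I + F|_\ngo$ back into the master equation delivers the commutator identity. The abelian case $\lb_\ngo = 0$ is trivial under the convention $\beta/\|\beta\|^2 = 0$. The main obstacle is the careful bookkeeping in the first step to get the master equation with the correct coefficients; everything afterwards is a clean trace argument.
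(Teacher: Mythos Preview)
Your proposal is correct and follows essentially the same route as the paper. The paper's proof is deliberately terse, deferring to \cite[Lemma 4.9]{alek} and \cite[Theorem 4.6]{alek}; what you have written is precisely an unpacking of those arguments: the ``master equation'' is the analogue of equation (36) in \cite{alek} (with the extra $\alpha^2[D_\ngo,D_\ngo^t]$ term), your key inequality $\tr \mm_\ngo [T,T^t]\geq 0$ is the mechanism behind \cite[Lemma 4.9]{alek}, and the Cauchy--Schwarz saturation is the content of \cite[Theorem 4.6]{alek}. The only bookkeeping you left implicit is that $c_1=\tfrac12$ and $c_2=\tfrac{\alpha^2}{2}$, which are needed to recover the commutator identity in its stated form.
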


\begin{proof}
The proof of the first two claims is analogous to that of \cite[Lemma 4.9]{alek}. The only difference here is that equation (36) in that paper translates to
\begin{equation}\label{adytder}
\mm_\ngo + \unm \sum [(\ad Y_i)|_\ngo, (\ad Y_i)|_\ngo^t] + \unm \alpha^2[D_\ngo, D_\ngo^t] + \tfrac{\lambda}{\|\beta\|^2} \beta = 0.
\end{equation}
Notice that the term $\unm \alpha^2[D_\ngo, D_\ngo^t]$ can be treated as if it were one of the form $[(\ad Y_i)|_\ngo, (\ad Y_i)|_\ngo^t]$.

To prove the last assertion, one argues as in the proof of \cite[Theorem 4.6]{alek}, using that in equation \eqref{adytder} the maps $\unm \sum [(\ad Y_i)|_\ngo, (\ad Y_i)|_\ngo^t] + \unm \alpha^2[D_\ngo, D_\ngo^t]$ and $\mm_\ngo + \tfrac{\lambda}{\|\beta\|^2} \beta$ are mutually orthogonal by the first part of the proof.
\end{proof}

\begin{lemma}
The operator $S(\ad_\pg H)$ satisfies $S(\ad H|_\hg) = 0$.
\end{lemma}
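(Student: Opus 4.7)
The plan is to extract two facts about $P := \ad H|_\hg$---namely $\tr S(P)^2 = \lambda\,\tr S(P)$ and $\tr S(P) = 0$---from the two trace identities at our disposal, and then combine them. Once both are in hand, $\tr S(P)^2 = 0$ and the symmetry of $S(P)$ force $S(P) = 0$, which is the claim.

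A preliminary observation is that $D_\pg$ is itself block diagonal with respect to $\pg = \hg \oplus \ngo$. Indeed, by Lemma \ref{cuenta} (iii)--(iv) the operator $T := \ad_\pg H + D_\pg$ is normal with $S(T) = -\lambda E_\beta$; normality is equivalent to $[E_\beta, A(T)] = 0$, and since $E_\beta$ has $0$-eigenspace $\hg$ and is positive definite on $\ngo$, $A(T)$ preserves both summands. Combined with the block-diagonality of $S(T)$ and of $\ad_\pg H$ in \eqref{formadH}, this yields $D_\pg = D_\hg \oplus D_\ngo$ with $S(D_\hg) = -S(P)$.

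For the first fact, I would apply Lemma \ref{algebraic} with $Y = H$ to obtain $\tr \minimatrix{P}{0}{0}{0} \Ricci = 0$. Substituting \eqref{conditionRic}, the $\lambda I$ and $S(D_\pg)$ pieces contribute $\lambda\,\tr S(P)$ and $-\tr S(P)^2$ respectively. The commutator piece reduces, by block-diagonality, to $\alpha^2 \tr P\,[S(D_\hg), A(D_\hg)]$, and I expect this to vanish: a short cyclic rearrangement rewrites it as $-\tr [A(P), S(P)]\,A(D_\hg)$, and since $[A(P), S(P)]$ is symmetric while $A(D_\hg)$ is skew, the trace is zero. This gives $\tr S(P)^2 = \lambda\,\tr S(P)$.

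For the second fact, I would apply Lemma \ref{ricderiv} to the derivation $D$, which preserves $\kg$: $\tr(\Ricci + S(\ad_\pg H))\,D_\pg = 0$. By Lemma \ref{cuenta}(iii), $\Ricci + S(\ad_\pg H) = \lambda I - \lambda E_\beta + \alpha^2 [S(D_\pg), A(D_\pg)]$; the commutator contributes nothing when traced against $D_\pg = S(D_\pg) + A(D_\pg)$ (standard cyclic identity for trace of $[S, A]$ against either $S$ or $A$), leaving $\tr D_\pg = \tr E_\beta\,D_\pg$. Block-diagonality and $E_\beta|_\hg = 0$ reduce the right-hand side to $\tr E_\beta|_\ngo\,D_\ngo$, and since $D_\ngo = D|_\ngo \in \Der(\ngo, \lb_\ngo)$, Theorem \ref{git}(iv) collapses this to $\tr D_\ngo$. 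Hence $\tr D_\hg = 0$, and as $\tr D_\hg = \tr S(D_\hg) = -\tr S(P)$, we conclude $\tr S(P) = 0$. The only mildly delicate step is the symmetric-versus-skew cancellation in the previous paragraph; everything else is direct bookkeeping against the structural facts already established.
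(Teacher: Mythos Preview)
Your argument is correct and follows the same skeleton as the paper's: apply Lemma~\ref{algebraic} with $Y=H$ to get $\tr S(P)^2=\lambda\,\tr S(P)$, establish $\tr S(P)=0$ separately, and combine. The differences are in the execution of two sub-steps. First, for $\tr S(P)=0$ the paper simply quotes $\tr\ad H|_\hg=0$ from \cite[Lemma~2.6]{alek}, whereas you derive it internally via Lemma~\ref{ricderiv} and Theorem~\ref{git}(iv); your route is longer but self-contained. Second, for the commutator term the paper observes directly from \eqref{formadH} that $\left[\begin{smallmatrix}S(\ad H|_\hg)&0\\0&0\end{smallmatrix}\right]$ commutes with $S(D_\pg)$, so $\tr\left[\begin{smallmatrix}S(\ad H|_\hg)&0\\0&0\end{smallmatrix}\right][S(D_\pg),A(D_\pg)]=\tr\bigl[\left[\begin{smallmatrix}S(\ad H|_\hg)&0\\0&0\end{smallmatrix}\right],S(D_\pg)\bigr]A(D_\pg)=0$; this bypasses both your preliminary block-diagonality of $D_\pg$ and the symmetric-versus-skew cancellation. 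Your preliminary observation is nonetheless correct and harmless.
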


\begin{proof}
Lemma \ref{cuenta}, (i) allows us to use Lemma \ref{algebraic}, which we will use for $Y = H \in \hg$. The fact that the operators $\minimatrix{S(\ad H|_\hg)}{0}{0}{0}$ and $S(D_\pg)$ commute (which follows from \eqref{formadH}), together with Lemma \ref{algebraic}, and \eqref{conditionRic} imply that
\begin{align*}
    0 =& \tr \minimatrix{S(\ad H|_\hg)}{0}{0}{0} \Ricci = \lambda \tr S(\ad H|_\hg) - \tr S(\ad H|_\hg)^2 \\
    & + \alpha^2 \tr \minimatrix{S(\ad H|_\hg)}{0}{0}{0} \left[S(D_\pg), A(D_\pg)\right] \\
     =&  - \tr S(\ad H|_\hg)^2 + \tr \left[\minimatrix{S(\ad H|_\hg)}{0}{0}{0}, S(D_\pg)\right] A(D_\pg) \\
    =& - \tr S(\ad H|_\hg)^2,
\end{align*}
thus $S(\ad H|_\hg) = 0$ as claimed. Notice that in the second equality we are using that $\tr\ad H|_\hg = 0$, which follows from \cite[Lemma 2.6]{alek}.
\end{proof}

To conclude the proof of the theorem, let us show that condition $S(\ad H|_\hg) = 0$ implies that $\ad_\pg H$ and $D_\pg$ are both normal operators. This will follow in a way analogous to the proof of \cite[Proposition 4.14]{alek}. Indeed, now we have that
\[
S(\ad_\pg H) = \minimatrix{0}{0}{0}{S(\ad H|_\ngo)}.
\]
On the other hand, using Lemma \ref{lemmaadytder} and \cite[Lemma 4.4]{alek} we have that $\mm_\pg|_\ngo = \mm_\ngo$. Therefore,
\begin{align*}
    \tfrac14 \left\|\pi\left( \left( \ad_\pg H\right)^t \right) \lb_\pg \right\|^2 =& \tr M_\pg [\ad_\pg H, \left( \ad_\pg H\right)^t ] \\
    =& \tr M_\ngo [\ad{H}|_\ngo, \left( \ad{H}|_\ngo\right)^t ] \\
    =& \tfrac14 \left\|\pi\left( \left( \ad{H}|_\ngo\right)^t \right) \lb_\ngo\right\|^2 = 0,
\end{align*}
and thus $(\ad_\pg H)^t \in \Der(\lb_\pg)$. Besides, we have that $(\ad_\pg H)^t H = 0$, since
\[
    \langle (\ad_\pg H)^t H , X \rangle = \langle H, [H,X] \rangle = \tr [\ad H, \ad X] = 0, \quad \forall X\in \pg.
\]
Hence,
\[
    \left( \ad_\pg H\right)^t \left( [H,X] \right) = [\left( \ad_\pg H\right)^t H, X] + [H, \left( \ad_\pg H\right)^t X] = [H, \left( \ad_\pg H\right)^t X],
\]
and this says that $\ad_\pg H$ is normal.

Also, since we have equality in Lemma \ref{ineqF} we obtain $D(H) = 0$, so
\[
    [D, \ad H] = \ad (D(H)) = 0,
\]
and in particular $[D_\pg, \ad_\pg H] = 0$. Finally, using this fact together with Lemma \ref{cuenta}, (iv) and the fact that $\ad_\pg H$ is normal, we can conclude that $D_\pg$ is normal, which implies that $(G/K,g)$ is an algebraic soliton.

\bibliography{ramlaf}

\newcommand{\noop}[1]{} \def\cprime{$'$}
\providecommand{\bysame}{\leavevmode\hbox to3em{\hrulefill}\thinspace}
\providecommand{\MR}{\relax\ifhmode\unskip\space\fi MR }
\providecommand{\MRhref}[2]{%
  \href{http://www.ams.org/mathscinet-getitem?mr=#1}{#2}
}
\providecommand{\href}[2]{#2}
\begin{thebibliography}{HPW13b}

\bibitem[AL13]{lowdim}
Romina Arroyo and Ramiro Lafuente, \emph{Homogeneous {R}icci solitons in low
  dimensions}, arXiv:1312.7461, \noop{2}(2013).

\bibitem[Bes87]{Bss}
Arthur~L. Besse, \emph{Einstein manifolds}, Ergebnisse der Mathematik und ihrer
  Grenzgebiete (3) [Results in Mathematics and Related Areas (3)], vol.~10,
  Springer-Verlag, Berlin, 1987.

\bibitem[CSW11]{CSW}
Jeffrey Case, Yu-Jen Shu, and Guofang Wei, \emph{Rigidity of quasi-{E}instein
  metrics}, Differential Geom. Appl. \textbf{29} (2011), no.~1, 93--100.

\bibitem[HPW12]{HePtrWylClassif}
Chenxu He, Peter Petersen, and William Wylie, \emph{On the classification of
  warped product {E}instein metrics}, Comm. Anal. Geom. \textbf{20} (2012),
  no.~2, 271--311.

\bibitem[HPW13a]{HePtrWylUniq}
\bysame, \emph{Uniqueness of warped product {E}instein metrics and
  applications}, arXiv:1110.2456v2, \noop{0}(2013).

\bibitem[HPW13b]{HePtrWyl}
\bysame, \emph{Warped product {E}instein metrics on homogeneous spaces and
  homogeneous {R}icci solitons}, J. Reine Angew. Math. (in press)
  (\noop{1}2013), arXiv:1302.0246.

\bibitem[Jab13a]{Jbl}
Michael Jablonski, \emph{Homogeneous {R}icci solitons}, J. Reine Angew. Math.
  (in press) (\noop{0}2013), arXiv:1109.6556v1.

\bibitem[Jab13b]{Jbl13b}
\bysame, \emph{Homogeneous {R}icci solitons are algebraic}, arXiv:1309.2515,
  \noop{2}(2013).

\bibitem[KK03]{KK}
Dong-Soo Kim and Young~Ho Kim, \emph{Compact {E}instein warped product spaces
  with nonpositive scalar curvature}, Proc. Amer. Math. Soc. \textbf{131}
  (2003), no.~8, 2573--2576 (electronic).

\bibitem[Lau01]{soliton}
Jorge Lauret, \emph{Ricci soliton homogeneous nilmanifolds}, Math. Ann.
  \textbf{319} (2001), no.~4, 715--733.

\bibitem[Lau09]{cruzchica}
\bysame, \emph{Einstein solvmanifolds and nilsolitons}, New developments in
  {L}ie theory and geometry, Contemp. Math., vol. 491, Amer. Math. Soc., 2009,
  pp.~1--35.

\bibitem[Lau10]{standard}
\bysame, \emph{Einstein solvmanifolds are standard}, Ann. of Math. (2)
  \textbf{172} (2010), no.~3, 1859--1877.

\bibitem[Lau11]{solvsolitons}
\bysame, \emph{Ricci soliton solvmanifolds}, J. Reine Angew. Math. \textbf{650}
  (2011), 1--21.

\bibitem[LL13a]{homRS}
Ramiro Lafuente and Jorge Lauret, \emph{On homogeneous {R}icci solitons}, Q. J.
  Math. (in press) (\noop{000}2013).

\bibitem[LL13b]{alek}
\bysame, \emph{Structure of homogeneous {R}icci solitons and the {A}lekseevskii
  conjecture}, J. Differential Geom. (in press) (\noop{001}2013).

\bibitem[LW11]{einsteinsolv}
Jorge Lauret and Cynthia Will, \emph{Einstein solvmanifolds: existence and
  non-existence questions}, Math. Ann. \textbf{350} (2011), no.~1, 199--225.

\bibitem[WW07]{WW}
William Wylie and Goufang Wei, \emph{Comparison geometry for smooth metric
  measure spaces}, Proc. of the 4th ICCM, vol.~II, 2007, pp.~191--202.

\end{thebibliography}
\bibliographystyle{amsalpha}

\end{document}